\documentclass[10pt,letterpaper]{amsart}

\usepackage[margin=1.5in]{geometry}

\usepackage{standalone}

\usepackage{amsrefs}
\usepackage{amsfonts}
\usepackage{amssymb}
\usepackage{amsmath}
\usepackage{amsthm}
\usepackage[inline]{enumitem}
\usepackage{hyperref}
\usepackage{enumitem}

\usepackage{color}


\usepackage[textsize=small]{todonotes}
\setlength{\marginparwidth}{2cm}
\usepackage{soul}
\makeatletter
 \if@todonotes@disabled
 
 \else
 
 \fi
 \makeatother

\newtheorem{theorem}{Theorem}
\newtheorem{corollary}[theorem]{Corollary}

\newtheorem{definition}[theorem]{Definition}
\newtheorem{lemma}[theorem]{Lemma}
\newtheorem{claim}[theorem]{Claim}
\newtheorem{question}[theorem]{Question}
\newtheorem{remark}[theorem]{Remark}
\newtheorem{fact}[theorem]{Fact}

\title{How robustly can you predict the future?}

\author{Sean Cox}
\address{
Department of Mathematics and Applied Mathematics \\
Virginia Commonwealth University \\
1015 Floyd Avenue \\
Richmond, Virginia 23284, USA.  
}
\email{scox9@vcu.edu}

\author{Matthew Elpers}
\email{mattyelps@gmail.com}

\thanks{We thank the anonymous referee for their careful reading of the paper, their discovery (and fix) of an error in the original proof of Theorem \ref{thm_MainNegativeWeak}, and their improvement of part \ref{item_Cyclic} of Corollary \ref{cor_Pos}.  Partially funded by VCU SEED Grant of the first author.}

\subjclass[2020]{03E25, 22A99,  22F05, 26A99, 28E15}

\keywords{H\"older's Theorem, Axiom of Choice, free actions, commutators, Hat Puzzles}

\begin{document}

\begin{abstract}
Hardin and Taylor~\cite{MR2384262} proved that any function on the reals---even a nowhere continuous one---can be correctly predicted, based solely on its past behavior, at almost every point in time.  They showed in \cite{MR3100500} that one could even arrange for the predictors to be robust with respect to simple time shifts, and asked whether they could be robust with respect to other, more complicated time distortions.  This question was partially answered by Bajpai and Velleman~\cite{MR3552748}, who provided upper and lower frontiers (in the subgroup lattice of $\text{Homeo}^+(\mathbb{R})$) on how robust a predictor can possibly be.  We improve both frontiers, some of which reduce ultimately to consequences of H\"older's Theorem (that every Archimedean group is abelian).
\end{abstract}

\maketitle


\section{Introduction}\label{sec_Intro}

If $S$ is a nonempty set, $\boldsymbol{{}^{\mathbb{R}} S}$ will denote the set of total functions from $\mathbb{R}$ to $S$, and $\boldsymbol{{}^{\underset{\smile}{\mathbb{R}}} S}$ will denote the set of all $S$-valued functions $f$ such that $\text{dom}(f)=(-\infty,t_f)$ for some $t_f \in \mathbb{R}$.  An \textbf{$\boldsymbol{S}$-predictor} will refer to any function $\mathcal{P}$ with domain and codomain as follows:
\begin{equation}\label{eq_DomCodP}
\mathcal{P}: {}^{\underset{\smile}{\mathbb{R}}} S \to S.
\end{equation}
If $f:(-\infty,t_f) \to S$ is a member of ${}^{\underset{\smile}{\mathbb{R}}} S$, we could view $\mathcal{P}(f)$ as an attempt to predict which member of $S$ (which ``state") the function $f$ would/should/will pick out at ``time" $t_f$, \emph{if} $t_f$ were in its domain.  We gauge how well $\mathcal{P}$ makes predictions by asking:  for which $F \in {}^\mathbb{R} S$ and which $t \in \mathbb{R}$ does $\mathcal{P}$ correctly predict $F(t)$, based solely on $F|_{(-\infty,t)}$?  I.e., for which $F:\mathbb{R} \to S$ and $t \in \mathbb{R}$ does the equality
\begin{equation}\label{eq_CorrectPredictAtt}
\mathcal{P}\Big( F|_{(-\infty,t)} \Big) = F(t) \tag{*}
\end{equation}
hold?  

If we restricted our attention to \emph{continuous} $F:\mathbb{R} \to S$ (with respect to some nice topology on $S$) then the problem would trivialize, since $\mathcal{P}\Big( F|_{(-\infty,t)}  \Big)$ could simply pick out $\lim_{z \nearrow t} F(z)$, which depends only on $F|_{(-\infty, t)}$.  But if $|S|\ge 2$ then it is impossible to find a $\mathcal{P}$ such that the equation \eqref{eq_CorrectPredictAtt} holds for every function $F:\mathbb{R} \to S$ and every $t \in \mathbb{R}$.  Simply fix any $f: (-\infty,0) \to S$; since $|S|\ge 2$ there is a (possibly non-continuous) function $F:\mathbb{R} \to S$ such that $F|_{(-\infty,0)}=f$ and $F(0) \ne \mathcal{P}( f )$.  Then $\mathcal{P} \Big( F|_{(-\infty,0)}  \Big) = \mathcal{P}(f) \ne F(0)$, so $\mathcal{P}$ failed to predict $F(0)$.

Hardin and Taylor~\cite{MR2384262} considered what happens when we require the equality \eqref{eq_CorrectPredictAtt} to merely hold for \emph{almost} every $t$.  We will say that an $S$-predictor $\mathcal{P}$ is \textbf{good} if, for all total functions $F: \mathbb{R} \to S$, the equation \eqref{eq_CorrectPredictAtt} holds for all except measure-zero many $t \in \mathbb{R}$ (where the measure zero set of ``bad" predictions is allowed to depend on $F$).  They proved:
\begin{theorem}[Hardin-Taylor~\cite{MR2384262}]\label{thm_HardinTaylor}
For every set $S$, there exists a good $S$-predictor.
\end{theorem}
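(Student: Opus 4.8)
The plan is to synthesize the predictor from a single application of the Axiom of Choice and then let elementary real analysis finish the job. Fix, by AC, a well-ordering $\prec$ of ${}^{\mathbb{R}}S$, the set of all total functions $\mathbb{R}\to S$. Given $f\in{}^{\underset{\smile}{\mathbb{R}}}S$, with $\mathrm{dom}(f)=(-\infty,t_f)$, let $G_f$ be the $\prec$-least total function $G:\mathbb{R}\to S$ for which $\{x<t_f : G(x)\neq f(x)\}$ has measure zero; this family of $G$'s is nonempty (any total extension of $f$ lies in it) and ${}^{\mathbb{R}}S$ is $\prec$-well-ordered, so $G_f$ exists and depends only on $f$. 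Define $\mathcal{P}(f):=G_f(t_f)$. Informally: having observed the past $f$, the predictor commits to the $\prec$-simplest total history agreeing with $f$ off a null set, and reads off its value ``now.'' The ingenuity is front-loaded into this definition---pairing a well-ordering with the ``agrees off a null set'' relation; the verification below is then essentially bookkeeping, resting on one lemma about monotone functions into a well-order.

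To check that $\mathcal{P}$ is good, fix a total $F:\mathbb{R}\to S$ and abbreviate $H_t:=G_{F|_{(-\infty,t)}}$, so that $\mathcal{P}(F|_{(-\infty,t)})=H_t(t)$ and the set of mistakes is $B_F=\{t\in\mathbb{R} : H_t(t)\neq F(t)\}$. Two observations do the work. First, $t\mapsto H_t$ is $\prec$-nondecreasing: for $s<t$, any $G$ that agrees with $F$ outside a null subset of $(-\infty,t)$ also does so on $(-\infty,s)$, so the $\prec$-least witness at $s$ (selected from a larger family) is $\preceq$ the one at $t$. Second---and this is the point most likely to need care---a nondecreasing map from $\mathbb{R}$ into a well-ordered set has countable range: its fibers are pairwise disjoint intervals covering $\mathbb{R}$, ordering them by position gives a well-order matching $\prec$ on the values, the nondegenerate fibers are countably many (each meets $\mathbb{Q}$), and the points forming singleton fibers, being order-embedded in that well-order, constitute a subset of $\mathbb{R}$ well-ordered by $<$, hence countable.

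Now write the range of $t\mapsto H_t$ as $\{G^{(\xi)} : \xi<\eta\}$ with $\eta$ a countable ordinal, and set $I_\xi:=\{t : H_t=G^{(\xi)}\}$ (an interval) and $D_\xi:=\{x : G^{(\xi)}(x)\neq F(x)\}$. For each $t\in I_\xi$ we have $H_t=G^{(\xi)}$, so by construction $D_\xi\cap(-\infty,t)$ has measure zero; since $I_\xi$ is an interval, letting $t$ run up to $\sup I_\xi$ shows $D_\xi\cap(-\infty,\sup I_\xi)$ has measure zero, whence $I_\xi\cap D_\xi$---contained in that set together with at most the one point $\sup I_\xi$---has measure zero. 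Since $B_F=\bigcup_{\xi<\eta}(I_\xi\cap D_\xi)$, we have displayed $B_F$ as a countable union of measure-zero sets, so $B_F$ has measure zero, as needed. The sole use of choice is the well-ordering of ${}^{\mathbb{R}}S$; the only step that would be fatal to mishandle is the countability of the range of $t\mapsto H_t$, which is exactly what keeps the concluding union countable.
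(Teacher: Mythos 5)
Your proof is correct, but it takes a genuinely different route from the one in the paper (which follows Hardin--Taylor). The standard argument defines $G_f$ as the $\prec$-least total function that \emph{exactly extends} $f$; then at any bad time $t$ the function $H_t$ disagrees with $F$ at $t$ itself, so $H_t$ is disqualified as a candidate at every later time, forcing $t\mapsto H_t$ to be \emph{strictly} $\prec$-increasing on the bad set $B_F$. Thus $B_F$ order-embeds into the well-order, is itself a well-ordered set of reals, and is therefore countable (indeed nowhere dense) --- Fact \ref{fact_WO_MeasureZero} is applied once, directly to $B_F$. Your relaxation to ``agrees with $f$ off a null set'' destroys this disqualification mechanism (a single pointwise error never evicts a candidate), so you must compensate with two extra ingredients: the lemma that a $\prec$-nondecreasing map $\mathbb{R}\to({}^{\mathbb{R}}S,\prec)$ has countable range (whose proof of the singleton-fiber case is where Fact \ref{fact_WO_MeasureZero} reappears), and the decomposition $B_F=\bigcup_\xi(I_\xi\cap D_\xi)$ into countably many null pieces. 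Both steps are carried out correctly, including the cofinal-sequence argument needed to pass from ``$D_\xi\cap(-\infty,t)$ null for all $t\in I_\xi$'' to ``$D_\xi\cap(-\infty,\sup I_\xi)$ null.'' The trade-off: your predictor is only guaranteed to err on a measure-zero set, which may be uncountable, whereas the exact-extension predictor errs on a countable, nowhere dense set; on the other hand, your version is arguably more robust, since it shows the conclusion survives even when the predictor is only required to respect the past up to a null set. Either way the theorem as stated follows.
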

They showed in \cite{MR3100500} that the $S$-predictor could even be arranged to be independent of time shifts; i.e., to have the property that if $f$ is some horizontal (constant) shift of $g$, then $\mathcal{P}(f)=\mathcal{P}(g)$.  They point out that this result has its roots in a 1965 problem of Galvin about ``infinite hat" puzzles, which appeared in the problems section of the \emph{American Mathematical Monthly} (\cite{MR1533548}).  The problem elicited an incorrect solution (\cite{MR1534092}) and a later correction (\cite{MR1534397}), both of which could be viewed as precursors to the key ideas in the various Hardin-Taylor results, and to the following question:
\begin{question}[paraphrase of Question 7.8.3 of \cite{MR3100500}]\label{q_BigQ}
How robust can a good predictor be, with respect to distortions in time?
\end{question}

More precisely:  let $\text{Homeo}^+(\mathbb{R})$ denote the group (under composition) of increasing homeomorphisms of $\mathbb{R}$.  Following \cite{MR3100500}, if $U \subseteq \text{Homeo}^+(\mathbb{R})$,\footnote{We do not necessarily assume $U$ is a group; e.g., the set $C^\infty(\mathbb{R}) \cap \text{Homeo}^+(\mathbb{R})$ in Bajpai-Velleman's Theorem \ref{thm_BV_neg} below is not a group under composition, because it is not closed under inverses.} an $S$-predictor $\mathcal{P}$ is called \textbf{$\boldsymbol{U}$-anonymous}\footnote{``$U$-invariant" would also be an appropriate name, but we (following \cite{MR3552748}) reserve that terminology for certain functions from $\mathbb{R} \to S$; see Section \ref{sec_Pos}.} if $\mathcal{P}(f)=\mathcal{P}(f \circ \varphi)$ for all $\varphi \in U$ and all $f \in {}^{\underset{\smile}{\mathbb{R}}} S$.  Since $\text{dom}(f) = (-\infty,t_f)$ for some $t_f \in \mathbb{R}$, the domain of $f \circ \varphi$ is understood to be $\Big(-\infty, \varphi^{-1}(t_f)\Big)$.  See Figure \ref{fig_TopoSine} for an example (with $S=\mathbb{R}$).


\begin{figure}[h]
\caption{$f(x) = \sin(1/x)$ with domain $(-\infty,0)$, and $\varphi(x)=x+e^{x+5}$ is a particular member of $\text{Homeo}^+(\mathbb{R})$.  If $\varphi \in U \subseteq\text{Homeo}^+(\mathbb{R})$ and $\mathcal{P}:{}^{\protect\underset{\smile}{\mathbb{R}}} \mathbb{R} \to \mathbb{R}$ is a $U$-anonymous $\mathbb{R}$-predictor, then $\mathcal{P}(f)$ is required to be the same as $\mathcal{P}(f \circ \varphi)$.}
\label{fig_TopoSine}
\centering
\includegraphics[scale=.5]{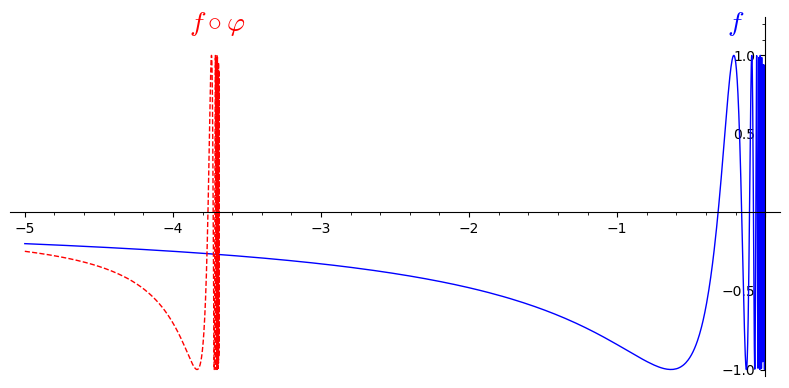}
\end{figure}

 So, $U$-anonymity of $\mathcal{P}$ means that $\mathcal{P}$ is insensitive to distortions in time caused by members of $U$.  The larger the set $U$, the more robust is the predictor $\mathcal{P}$.  We already mentioned that Hardin and Taylor produced good predictors that were independent of horizontal shifts; using the terminology above, their theorem can be rephrased as:  for every set $S$, there is a good $S$-predictor that is anonymous with respect to the group of shift functions (i.e., the group of functions of the form $x \mapsto x + c$ for some constant $c$).  This was improved by Bajpai and Velleman:
\begin{theorem}[Bajpai-Velleman~\cite{MR3552748}]\label{thm_BV_pos}
For any set $S$, there is a good $S$-predictor that is anonymous with respect to $\text{Aff}^+(\mathbb{R})$ (the group of affine functions of positive slope). 
\end{theorem}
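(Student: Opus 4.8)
The plan is to revisit the construction behind Theorem~\ref{thm_HardinTaylor}, and its shift-anonymous refinement, and to carry out the one nontrivial choice in that construction equivariantly for all of $\text{Aff}^+(\mathbb{R})$ rather than merely for the group of shifts. Recall the shape of the construction. To each $f\in{}^{\underset{\smile}{\mathbb{R}}} S$, with $\text{dom}(f)=(-\infty,t_f)$, attach the \emph{left germ} $[f]$ of $f$ at $t_f$, i.e.\ the class of $f$ under ``agreement on some interval $(t_f-\varepsilon,t_f)$''; a germ remembers the point $t_\gamma$ at which it is based. Using the Axiom of Choice, fix for each germ $\gamma$ a total ``continuation'' $\widehat{\gamma}\in{}^{\mathbb{R}}S$ whose left germ at $t_\gamma$ is $\gamma$, and set $\mathcal{P}(f):=\widehat{[f]}(t_f)$. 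The map $f\mapsto[f]$ intertwines the action $f\mapsto f\circ\varphi$ of $\text{Aff}^+(\mathbb{R})$ with an action on germs carrying a germ based at $t_\gamma$ to one based at $\varphi^{-1}(t_\gamma)$, and if the family $\{\widehat{\gamma}\}$ is chosen \emph{$\text{Aff}^+(\mathbb{R})$-equivariantly}, i.e.\ $\widehat{\gamma\circ\varphi}=\widehat{\gamma}\circ\varphi$, then
$$\mathcal{P}(f\circ\varphi)=\widehat{[f]\circ\varphi}\bigl(\varphi^{-1}(t_f)\bigr)=\bigl(\widehat{[f]}\circ\varphi\bigr)\bigl(\varphi^{-1}(t_f)\bigr)=\widehat{[f]}(t_f)=\mathcal{P}(f),$$
so $\mathcal{P}$ is $\text{Aff}^+(\mathbb{R})$-anonymous. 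Goodness should, as in the proof of Theorem~\ref{thm_HardinTaylor}, follow from an argument local near each $t$ that uses only that $\widehat{[f]}$ genuinely extends the left germ $[f]$ (together with a coherence property of the family $\{\widehat\gamma\}$), and is insensitive to which equivariant choice was made.

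It remains to produce the equivariant family $\{\widehat\gamma\}$. As $\text{Aff}^+(\mathbb{R})$ acts transitively on $\mathbb{R}$ through its translation part, every $\text{Aff}^+(\mathbb{R})$-orbit of germs meets the set of germs based at $0$, and on that set the residual group is the dilation group $D=\{x\mapsto ax:a>0\}$; so it is enough to pick, for each germ $\gamma$ based at $0$, a continuation invariant under the $D$-stabilizer $H$ of $\gamma$, and then propagate by equivariance. (The propagation is well defined precisely because of the invariance: if $\gamma\circ d_a=\gamma\circ d_{a'}$ then $d_{a/a'}\in H$, whence $\widehat{\gamma}\circ d_a=\widehat{\gamma}\circ d_{a'}$.) Now $H$ is a subgroup of $(\mathbb{R}_{>0},\times)\cong(\mathbb{R},+)$, hence is trivial, infinite cyclic, or dense. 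If $H$ is trivial there is nothing to do. If $H=\langle d_\lambda\rangle$ is infinite cyclic, then iterating the relation defining membership in the stabilizer toward $0$ shows that a representative of $\gamma$ is already constant on each $\langle d_\lambda\rangle$-orbit meeting a small left neighborhood of $0$, so it extends to a $\langle d_\lambda\rangle$-invariant total function (assign values freely on a fundamental domain for $\langle d_\lambda\rangle$ acting on $[0,\infty)$).

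The genuine obstacle is the case in which $H$ is \emph{dense but proper}. In contrast to the constant germs (for which $H$ is all of $D$ and the constant function serves as a continuation), a germ with, say, $D$-stabilizer $\{e^q:q\in\mathbb{Q}\}$ may fail to have \emph{any} $H$-invariant representative near $0$ — one can build a germ that is, on no left neighborhood of $0$, constant along the orbits of that group — so that no total extension of it is $H$-invariant and the naive equivariant propagation is simply not well defined for its orbit. Overcoming this — perhaps by isolating such germs and showing they occur at only Lebesgue-null-many times for any given $F$, so that they may be handled by a coarser rule without harming goodness, or by replacing germs with a slightly different invariant carrying the same local information — is the crux of the argument, as is checking that the Hardin--Taylor goodness proof survives the passage from shift-equivariant to affine-equivariant choices.
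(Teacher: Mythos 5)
Your strategy --- choose, for each left germ, a total continuation, and make the choice $\text{Aff}^+(\mathbb{R})$-equivariant --- is genuinely different from the argument the paper runs (which follows Bajpai--Velleman), and you have correctly located where it breaks: a germ based at $0$ whose stabilizer $H$ inside the dilation group is dense need not admit any $H$-invariant continuation, because the left neighborhood of $0$ on which $\gamma\circ d_a=\gamma$ is witnessed may shrink as $a$ ranges over $H$; for such an orbit your propagation rule is undefined. But you leave this crux unresolved, and your suggested escapes are not developed: the problematic germs are determined by the local behaviour of $F$ to the left of $t$, and nothing you say prevents a single total $F$ from realizing such a germ at positively many $t$, so ``handle them by a coarser rule'' is not available without further argument. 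The paper's resolution is structurally different. One fixes a single well-order $\triangleleft$ of ${}^{\mathbb{R}}S$ arranged so that functions invariant under some nonidentity translation come first, then functions invariant under some other nonidentity affine map, then everything else; the predictor reads off the $\triangleleft$-least total function extending \emph{some} member of the orbit $f\circ\text{Aff}^+(\mathbb{R})$ (so orbit-invariance of the chosen function is automatic), and all the work goes into showing that the predicted value does not depend on the witnessing $\varphi$ (Claim \ref{claim_WellDef}). Your stabilizer difficulty reappears there as the possibility that the chosen $F$ extends both $f\circ\varphi_1$ and $f\circ\varphi_2$, hence is \emph{past}-invariant under $\varphi_1^{-1}\varphi_2$ with no a priori reason to be fully invariant; it is defeated by the ordering (which forces the chosen $F$ to be fully invariant under \emph{something} whenever any candidate is --- Claim \ref{clm_Extensions}) together with Lemma \ref{lem_KeyLemmaHoelder}, a consequence of H\"older's Theorem, which upgrades past-invariance to full invariance for members of the freely acting commutator (translation) subgroup. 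Nothing in your proposal plays the role of that upgrade.

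A second, independent gap: your parenthetical claim that goodness ``is insensitive to which equivariant choice was made'' is false, and the unspecified ``coherence property'' is exactly the point. Take $S=\{0,1\}$ and assign to the constant-$0$ germ at $0$ the continuation that is $0$ on $\mathbb{R}\setminus\{0\}$ and $1$ at $0$; this is invariant under the full dilation group, so it propagates equivariantly to every base point, and the resulting predictor is wrong at \emph{every} $t$ for $F\equiv 0$. In both Hardin--Taylor and the present paper, goodness comes precisely from taking the $\triangleleft$-\emph{least} admissible extension with respect to one fixed well-order, so that $t\mapsto F_{H|_{(-\infty,t)}\circ U}$ embeds the bad set order-preservingly into a well-order, whence the bad set is countable by Fact \ref{fact_WO_MeasureZero}. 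So even after repairing the equivariance problem you would still have to route your continuations through a well-order and rerun that argument rather than appeal to locality.
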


The \emph{Axiom of Choice} was used in the proofs of both the Hardin-Taylor Theorem \ref{thm_HardinTaylor} and the Bajpai-Velleman Theorem \ref{thm_BV_pos}, and in our improvement to be discussed below (Theorem \ref{thm_MainPos}).  All of these results can be viewed as yet more strange consequences of the Axiom of Choice, the most famous of which are the Banach-Tarski Paradox and the existence of non-Lebesgue-measurable sets of reals.  In \cite{MR2384262}, Hardin and Taylor state:
\begin{quote}
``We should emphasize that these results do not give a practical means of predicting the future, just as the time dilation one would experience standing near the event horizon of a black hole does not give a practical time machine.  Nevertheless, we choose this presentation because we find it the most interesting, as well as pedagogically useful \dots"
\end{quote}

But the Axiom of Choice is not a panacea for obtaining anonymous predictors.  Even with the Axiom of Choice at their disposal, Bajpai and Velleman showed there is a limit to how robust predictors can be:

\begin{theorem}[Bajpai-Velleman~\cite{MR3552748}]\label{thm_BV_neg}
There is an equivalence relation $\sim$ on $\mathbb{R}$ such that, letting $S:=\mathbb{R}/\sim$, there is \textbf{no} good $S$-predictor that is anonymous with respect to the set
\[
C^\infty(\mathbb{R}) \cap \text{Homeo}^+(\mathbb{R}).
\] 
\end{theorem}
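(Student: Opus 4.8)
We argue by contradiction: assume $\mathcal{P}$ is a good $S$-predictor anonymous with respect to $C^\infty(\mathbb{R})\cap\text{Homeo}^+(\mathbb{R})$, and let us exhibit a single $F\colon\mathbb{R}\to S$ whose set of correctly predicted times is not conull. The first step is to repackage anonymity. It says precisely that $\mathcal{P}(f)=\mathcal{P}(f\circ\varphi)$ for every $\varphi\in C^\infty\cap\text{Homeo}^+$, where the domain of $f\circ\varphi$ is read as $(-\infty,\varphi^{-1}(t_f))$ per the paper's convention; hence $\mathcal{P}$ is constant on each class of the symmetric, transitive closure of the precomposition action, which I will call \emph{reparametrisation orbits}. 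Together with goodness, this reduces the whole theorem to: construct an equivalence relation $\sim$ on $\mathbb{R}$, a function $F\colon\mathbb{R}\to\mathbb{R}/\!\sim$, and a positive-measure set $A\subseteq\mathbb{R}$ such that (i) the restrictions $F|_{(-\infty,t)}$ for $t\in A$ all lie in one reparametrisation orbit, yet (ii) $F|_A$ takes at least two values, each on a positive-measure set. Indeed goodness then forces $F|_A$ to agree a.e.\ with the single value $\mathcal{P}$ assigns to that orbit, contradicting (ii).

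The cleanest version of the construction to aim for: take $\sim$ with exactly two classes $R_0\sqcup R_1=\mathbb{R}$ and let $F=q$ be the quotient map. Then $F|_{(-\infty,t)}$ is the colouring of $(-\infty,t)$ by membership in $R_0$, its orbit is the ambient $C^\infty$-diffeomorphism type of that coloured interval, (ii) becomes ``$A$ meets both $R_0$ and $R_1$ in positive measure'', and (i) asks that $\big(R_0\cap(-\infty,t),\,(-\infty,t)\big)$ be carried onto $\big(R_0\cap(-\infty,t'),\,(-\infty,t')\big)$ by an element of $C^\infty\cap\text{Homeo}^+$ for all $t,t'\in A$. Concretely, I would try to build $R_0\subseteq[0,1]$ together with, for each $c\in(0,1)$ (or at least for a positive-measure set of $c$), a map $\psi_c\in C^\infty\cap\text{Homeo}^+$ fixing $0$, sending $c$ to $1$, and carrying $R_0\cap[0,c)$ bijectively onto $R_0\cap[0,1)$; then the relevant $A$ is an interval $(0,1)$ (or a positive-measure subset), and (ii) is arranged by insisting $R_0$ and $R_1$ each meet every subinterval of $(0,1)$ in positive measure. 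This is exactly the step that exploits smooth homeomorphisms' full pliability---$\psi_c$ must stretch a tiny ``tower'' of $R_0$-structure onto a huge one, which no isometry or affine map can do---so the obstruction materialises already for the non-group set $C^\infty\cap\text{Homeo}^+$, not only for all of $\text{Homeo}^+(\mathbb{R})$.

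The main obstacle, I expect, is producing an $R_0$ with two antagonistic properties at once: \emph{strong smooth self-similarity} of all its coloured initial segments, and being \emph{spread out} ($R_0$ and $R_1$ positive-measure in every subinterval). The tension is real. First, a smooth reparametrisation preserves the germ of a coloured function at its right endpoint up to reparametrisation---hence preserves whether that function is eventually constant approaching the endpoint, and the value of such a constant---and the times $t$ at which $q|_{(-\infty,t)}$ is eventually constant while $q(t)$ is the opposite colour form only a countable (hence null) set, being the one-sidedly isolated points of a colour class. So over a positive-measure $A$ the common orbit must be of the ``not eventually constant at the right endpoint'' type, which already forces $R_0$ and $R_1$ both to accumulate from the left at a.e.\ point of $A$; in particular neither class may be open or closed. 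Second, too generous a supply of $\sim$-preserving reparametrisations of $F$ tends to force $F$ locally constant, so the self-similarity has to be metered: one wants the maps $\psi_c$ organised, say, as a one-parameter family $\{\Theta_u\}$ of diffeomorphisms each compressing $R_0$ onto a proper initial part of itself, so that $F|_{(-\infty,t)}$ and $F|_{(-\infty,t')}$ are joined by a single explicit map whose value at the right endpoint is \emph{allowed} to leave the colour class of that endpoint---the colour-preservation equation $\psi_c(x)\sim x$ is imposed only for $x$ strictly below the endpoint.

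Pinning down such an $R_0$---establishing the uniformity in $c$, the positivity of all the measures involved, and the membership of every map used in $C^\infty\cap\text{Homeo}^+$---is the technical heart of the argument. Once it is in hand, the repackaging of anonymity into reparametrisation orbits and the concluding appeal to goodness are routine, and the contradiction is immediate.
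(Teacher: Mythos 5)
Your reduction is sound and is essentially the paper's Lemma \ref{lem_BlockingEquivRel}: evaluate the putative predictor on the quotient map $t \mapsto [t]_\sim$, use anonymity to force the prediction to be constant on a large set of times, and use the fact that no single value can be correct off a null set. But everything after that reduction --- which is the entire content of the theorem --- is missing, and the shape you propose for the construction is not the one that works. You ask for an equivalence relation with \emph{two} classes $R_0, R_1$, each of positive measure in every subinterval, admitting for each pair $t,t'$ in a positive-measure set a diffeomorphism matching $R_0\cap(-\infty,t)$ with $R_0\cap(-\infty,t')$ and sending $t\mapsto t'$. You correctly sense that the ``self-similarity versus spread-out'' tension is severe, but you give no construction, no candidate $R_0$, and no mechanism for producing the maps $\psi_c$; you explicitly defer ``the technical heart.'' As it stands this is a statement of desiderata, not a proof, and it is far from clear that a two-class partition with these properties exists at all.

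The idea you are missing is to make the equivalence classes \emph{countable} rather than fat. Bajpai and Velleman generate $\sim$ by a countable family $\mathcal{F}$ of smooth partial injections (conjugates $\Phi\circ s\circ\Psi$ of a fixed $C^\infty$ bijection $s:[0,1]\to[0,1]$ with flat endpoints, by rational affine maps $\Phi,\Psi$). Then every class is countable, so ``no class is conull'' is automatic and your condition (ii) trivializes: the predictor's single output class is null, so it is wrong at co-countably many $t$. The hard analytic work is then concentrated where it belongs: given \emph{any} $(x,y)$, one concatenates countably many members of $\mathcal{F}$ into an increasing $C^\infty$ map $h$ on $(-\infty,x)$ that approaches $(x,y)$ flatly enough that adjoining the point $(x,y)$ and extending smoothly yields $\varphi\in C^\infty(\mathbb{R})\cap\text{Homeo}^+(\mathbb{R})$ with $\varphi(x)=y$ and $\varphi(z)\sim z$ for all $z<x$. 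This gives your condition (i) for \emph{all} $t,t'$, not merely a positive-measure set, and the delicate point is purely local smoothness at the gluing point --- a far more tractable problem than the global measure-theoretic homogeneity your two-class set $R_0$ would have to satisfy. Without either that construction or a proof that your $R_0$ exists, the argument has a genuine gap.
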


Clearly if $U_0 \subset  U_1$, then a $U_1$-anonymous predictor is also $U_0$-anonymous.  So Theorems \ref{thm_BV_pos} and \ref{thm_BV_neg} provide ``frontiers" in the subgroup lattice of $\text{Homeo}^+(\mathbb{R})$ for how anonymous we can require predictors to be:  the lower frontier is $\text{Aff}^+(\mathbb{R})$, and the upper frontier is $C^\infty(\mathbb{R}) \cap \text{Homeo}^+(\mathbb{R})$.  

Of course this is a large gap, and \cite{MR3552748} closed by asking what happens between those two extremes. Our main results address this problem, thus chipping away at Question \ref{q_BigQ}.  Our new Theorems \ref{thm_MainPos} and \ref{thm_MainNeg} below are strengthenings of Theorems \ref{thm_BV_pos} and \ref{thm_BV_neg}, respectively.    In order to more flexibly derive examples, we state Theorem \ref{thm_MainPos} in terms of $\text{Homeo}^+(I)$ where $I$ is any open interval of real numbers.  See Section \ref{sec_Pos} for the meaning of \emph{good $S$-predictor} in this context, which is the direct generalization of the definition given above (when $I=\mathbb{R}$).

\begin{theorem}\label{thm_MainPos}
Suppose $I$ is an open interval of real numbers, $U$ is a subgroup of $\text{Homeo}^+(I)$, and:
\begin{enumerate}
 \item\label{item_CommActFree} The commutator subgroup of $U$ acts freely on $I$;\footnote{A group $C \le \text{Homeo}^+(I)$ is said to act freely on $I$ if for every non-identity $\varphi \in C$, $\varphi$ has no fixed points.  This is a special case of the general notion of a free action of a group.} and 

 \item\label{item_AtMostOneFP} each non-identity member of $U$ has at most one fixed point.
\end{enumerate}
Then for any set $S$, there is a good $U$-anonymous $S$-predictor.
\end{theorem}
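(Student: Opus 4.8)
The plan is to reduce to the Hardin--Taylor and Bajpai--Velleman positive results (Theorems~\ref{thm_HardinTaylor} and~\ref{thm_BV_pos}) by first using H\"older's Theorem to give $U$ a concrete description, and then transporting a good predictor along a (semi-)conjugacy between $U$ and a subgroup of $\text{Aff}^+(\mathbb{R})$. The two delicate points are (i) making that structural description precise, and (ii) arranging that ``goodness'' — a measure-theoretic property — survive a topological change of coordinates that need not preserve Lebesgue-null sets.

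\emph{Step 1: structure of $U$.} Since $N:=[U,U]$ acts freely on $I$, its natural bi-invariant order is Archimedean: freeness puts each nontrivial element strictly above or strictly below the identity everywhere, and a short argument shows that powers of any such element eventually dominate any fixed element. So by H\"older's Theorem $N$ is abelian — hence $U$ is metabelian — and $N$ is semi-conjugate to a subgroup of $(\mathbb{R},+)$ acting by translations. Now invoke hypothesis~\ref{item_AtMostOneFP}: if some nontrivial $\varphi\in U$ has a fixed point $p$, then $p$ is the unique fixed point of \emph{every} nontrivial element of $U$ (a second fixed point would force triviality), so $U$ fixes $p$, acts freely on each component of $I\setminus\{p\}$, and is in particular abelian; otherwise $U$ acts freely on $I$. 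Going through the possibilities for $N$ (trivial; infinite cyclic; dense in $\mathbb{R}$), one shows there is a homeomorphism $h\colon I\to\mathbb{R}$ with $V:=hUh^{-1}\le\text{Aff}^+(\mathbb{R})$, the one exception being when $N$'s translation action fails to be minimal (a Denjoy-type situation), where $V$ is only \emph{semi}-conjugate to such a subgroup, the collapsed fibers forming a $V$-invariant family of pairwise disjoint open intervals. The lever in the minimal case is that a homeomorphism of $\mathbb{R}$ that conjugates every element of a dense group of translations to a translation must be affine.

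\emph{Step 2: countable error sets, and transport.} To defeat the measure problem, note that the Hardin--Taylor predictor can be taken with \emph{countable} error sets: fix a well-order $\prec$ of ${}^{I}S$ and put $\mathcal{P}(f):=F_f(t_f)$, where $t_f$ is the right endpoint of $\text{dom}(f)$ and $F_f$ is the $\prec$-least total function extending $f$. If $t<t'$ are both points where $\mathcal{P}$ errs on some fixed $G\in{}^{I}S$, then the $\prec$-least extension of $G|_{(\inf I,t')}$ agrees with $G$ at $t$ while that of $G|_{(\inf I,t)}$ does not, so the latter lies $\prec$-strictly below the former; hence $t\mapsto(\prec\text{-least extension of }G|_{(\inf I,t)})$ is strictly $\prec$-increasing on the error set, which is therefore well-ordered by $<$ and so countable. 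Installing the same refinement in the Bajpai--Velleman argument yields an $\text{Aff}^+(\mathbb{R})$-anonymous good $S$-predictor $\mathcal{P}_0$ with always-countable error sets. As countability is bijection-invariant, the predictor on ${}^{\underset{\smile}{I}} S$ given by $\mathcal{P}(f):=\mathcal{P}_0(f\circ h^{-1})$ is still good, and the identity $f\circ\varphi\circ h^{-1}=(f\circ h^{-1})\circ(h\varphi h^{-1})$, together with $h\varphi h^{-1}\in\text{Aff}^+(\mathbb{R})$ and the $\text{Aff}^+$-anonymity of $\mathcal{P}_0$, shows $\mathcal{P}$ is $U$-anonymous. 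This settles the theorem whenever $V\le\text{Aff}^+(\mathbb{R})$.

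\emph{Step 3: the non-minimal case, and the main obstacle.} When $V$ is only semi-conjugate to an affine group, via $\pi\colon I\to\mathbb{R}$, build $\mathcal{P}$ as a hybrid: inside each of the countably many $V$-permuted collapsed fibers run a shift-invariant, countable-error Hardin--Taylor predictor local to that fiber, and off the fibers use the predictor pulled back along $\pi$; the countably many error sets then have countable union, and $U$-anonymity follows from the equivariance of the fiber family together with Step 2. I expect the genuinely hard work to be Step 1 — particularly the Denjoy-type and infinite-cyclic cases of $N$ — together with verifying that the hybrid predictor of Step 3 is actually $U$-anonymous near the endpoints of the collapsed fibers; the measure bookkeeping, by contrast, is dispatched uniformly by the countable-error-set observation.
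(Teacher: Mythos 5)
Your overall strategy---classify $U$ up to (semi-)conjugacy and transport an affine-anonymous predictor---is genuinely different from the paper's, which never conjugates $U$ anywhere: it instead reruns the Bajpai--Velleman argument abstractly, using a wellorder of ${}^I S$ that lists $[U,U]$-invariant functions first, then $U$-invariant ones, and a key lemma (via H\"older) that a function invariant under \emph{some} nontrivial element of a freely acting group is fully invariant under any element it is past-invariant under. Your Step 2 is sound as far as it goes: the Hardin--Taylor/Bajpai--Velleman error sets really are countable (the paper uses exactly this wellordered-embedding observation), and countability survives an arbitrary homeomorphic change of coordinates, so the transport works \emph{whenever a genuine conjugacy exists}.

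The gap is in Step 1, and it is not cosmetic. The claim ``if some nontrivial $\varphi\in U$ has a fixed point $p$, then $p$ is the unique fixed point of every nontrivial element of $U$'' is false: $U=\text{Aff}^+(\mathbb{R})$ satisfies both hypotheses of the theorem, yet $x\mapsto 2x$ and $x\mapsto 2x-1$ are nontrivial elements with distinct fixed points. (Your parenthetical argument only works when $U$ is abelian, since then $\psi(p)$ is again a fixed point of $\varphi$; in general it proves nothing.) So your trichotomy collapses, and with it the derivation of the conjugacy $hUh^{-1}\le\text{Aff}^+(\mathbb{R})$. The statement you actually need---that a subgroup of $\text{Homeo}^+(I)$ all of whose nontrivial elements have at most one fixed point is semi-conjugate to a subgroup of the affine group---is essentially Solodov's theorem, a substantial result that your sketch does not come close to establishing and that the paper deliberately avoids. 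Moreover, semi-conjugacy (which is unavoidable: already a free Denjoy-type $\mathbb{Z}^2$-action satisfies the hypotheses but is not conjugate to translations) is too weak for your transport, and your Step 3 hybrid does not repair this: you do not identify how the stabilizer of a collapsed fiber acts on that fiber, so there is no reason a ``local Hardin--Taylor predictor'' on the fiber can be made anonymous for that stabilizer, and you yourself flag the anonymity at the fiber endpoints as unverified. As it stands the argument proves the theorem only in the special case where $U$ is honestly conjugate into $\text{Aff}^+(\mathbb{R})$.
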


The proof of Theorem \ref{thm_MainPos} relies heavily on the ideas of the proofs in Bajpai-Velleman~\cite{MR3552748} and Hardin-Taylor~\cite{MR2384262}.  In fact, Theorem \ref{thm_MainPos} is essentially the result of noticing that the Hardin-Taylor theorem about shift-anonymous predictors was essentially due to the classic group-theoretic \emph{H\"older's Theorem} discussed in Section \ref{sec_Holder}, and that the Bajpai-Velleman strengthening was essentially about about fixed points and commutator subgroups.

\begin{corollary}\label{cor_Pos}
We list several examples that satisfy the assumptions of Theorem \ref{thm_MainPos}.

\begin{enumerate}[label=(\Alph*)]
 \item Bajpai-Velleman's Theorem \ref{thm_BV_pos} is an immediate consequence of Theorem \ref{thm_MainPos} (with $I=\mathbb{R}$ and $U=\text{Aff}^+(\mathbb{R})$), since the commutator subgroup of $\text{Aff}^+(\mathbb{R})$ is exactly the group of shift functions (which act freely on $\mathbb{R}$), and non-identity affine functions have at most one fixed point.

  \item\label{item_U_itself_free} When $U \le \text{Homeo}^+(I)$ and $U$ itself acts freely on $I$, then $U$ trivially satisfies both assumptions of Theorem \ref{thm_MainPos}.  For example, this holds when $I=(0,1)$ and $U$ is the group of $\varphi \in \text{Homeo}^+(I)$ of the form $x \mapsto x^\alpha$ for some $\alpha > 0$ (this $U$ acts freely on $(0,1)$).  

  \item\label{item_FiniteFP} If $U$ acts transitively on $I$,\footnote{I.e., for every $x,y \in I$ there is some $\varphi \in U$ such that $\varphi(x)=y$.} the commutator subgroup of $U$ acts freely on $I$, and each non-identity member of $U$ has only finitely many fixed points, then in fact each non-identity member of $U$ has at most one fixed point (see Lemma \ref{lem_ActTransFP} in Section \ref{sec_Cor} for a proof).  Hence, Theorem \ref{thm_MainPos} applies.

  \item\label{item_Example_analytic} (Special case of part \ref{item_FiniteFP}):  suppose $U$ is a group of analytic functions in $\text{Homeo}^+\Big( [0,1] \Big)$; then it is well-known that, since $[0,1]$ is compact, each non-identity member of $U$ has only finitely many fixed points.\footnote{This follows from the compactness of $[0,1]$ and the fact that the zeros of a nonzero analytic function form a discrete set.}  Note that members of $U$ also act on the set $(0,1)$. So, if $U$ acts transitively on $(0,1)$ and the commutator subgroup of $U$ acts freely on $(0,1)$, then we are in the setting of part \ref{item_FiniteFP}, with $I=(0,1)$ (and where we replace $U$ by $\{ \varphi \restriction (0,1) \ : \ \varphi \in U   \}$).
  
  \item\label{item_Cyclic} If $\varphi \in \text{Homeo}^+(I)$ and $\varphi$ has only measure zero many fixed points, then for every set $S$, there exists a $\langle \varphi \rangle$-invariant, good $S$-predictor (where $\langle \varphi \rangle$ denotes the cyclic group generated by $\varphi$).  See Section \ref{sec_Cor} for why this follows from Theorem \ref{thm_MainPos}.  In particular, this holds whenever $\varphi$ has only countably many fixed points (e.g., for any non-identity analytic function in $\text{Homeo}^+(I)$).  
  
\end{enumerate}
\end{corollary}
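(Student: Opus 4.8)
The corollary splits into two routine checks, parts (A) and (B), and three points that need slightly more care, (C)--(E). For (A), the plan is to pin down the commutator subgroup of $\text{Aff}^+(\mathbb{R})$: the slope homomorphism $(x\mapsto ax+b)\mapsto a$ maps onto the abelian group $(\mathbb{R}_{>0},\cdot)$ with kernel the translations, so the commutator subgroup lies inside the translations, and a single commutator computation, e.g.\ $[x\mapsto ax,\,x\mapsto x+c]=\bigl(x\mapsto x+(a-1)c\bigr)$, shows that every translation already arises as a commutator; translations act freely on $\mathbb{R}$ and non-identity affine maps have at most one fixed point, so Theorem \ref{thm_MainPos} applies and re-proves Theorem \ref{thm_BV_pos}. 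For (B) I would simply note that a subgroup of a freely acting group acts freely and that a fixed-point-free homeomorphism has at most one fixed point, so any freely acting $U$ qualifies; the sample group $\{x\mapsto x^{\alpha}:\alpha>0\}$ acts freely on $(0,1)$ because $x^{\alpha}=x$ has no solution in $(0,1)$ unless $\alpha=1$.

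For (C) --- which is the content of Lemma \ref{lem_ActTransFP} --- the key step is a dichotomy forced by freeness of the commutator subgroup: for $\varphi,g\in U$ the element $[\varphi,g]=\varphi\circ(g\varphi g^{-1})^{-1}$ lies in the commutator subgroup, so either it is the identity (i.e.\ $g\varphi g^{-1}=\varphi$) or it has no fixed point at all, and after rewriting, the latter says precisely that the increasing homeomorphisms $\varphi$ and $g\varphi g^{-1}$ agree at no point of $I$. I would then argue by contradiction: if a non-identity $\varphi\in U$ had two fixed points $p\neq q$, transitivity supplies $g\in U$ with $g(p)=q$; then $q$ is a common fixed point of $\varphi$ and $g\varphi g^{-1}$, so the dichotomy forces $g\varphi g^{-1}=\varphi$, whence $g$ commutes with $\varphi$ and therefore permutes the finite set $\text{Fix}(\varphi)$; but an order-preserving permutation of a finite linearly ordered set is the identity, contradicting $g(p)=q\neq p$. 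Part (D) then drops out, since a non-identity analytic $\varphi\in\text{Homeo}^+\bigl([0,1]\bigr)$ fixes $0$ and $1$ and restricts to a homeomorphism of $(0,1)$, while $\varphi(x)-x$ is a nonzero analytic function whose zero set is discrete in the compact $[0,1]$ and hence finite, putting us in the setting of (C) with $I=(0,1)$.

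Part (E) is where I expect the real work, and the plan is to localize to the components of $I\setminus\text{Fix}(\varphi)$ and then glue. Since $\text{Fix}(\varphi)$ is closed, $I\setminus\text{Fix}(\varphi)$ is a countable disjoint union of open intervals $J_i=(a_i,b_i)$, and since $\varphi\in\text{Homeo}^+(I)$ is increasing and fixes $\text{Fix}(\varphi)$ pointwise it maps each $J_i$ onto itself with no fixed point inside $J_i$; hence $\langle\varphi\restriction J_i\rangle$ is abelian (so its commutator subgroup, being trivial, acts freely) and acts freely on $J_i$, so Theorem \ref{thm_MainPos} yields a good $\langle\varphi\restriction J_i\rangle$-anonymous $S$-predictor $\mathcal{P}_i$ on $J_i$. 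I would then define $\mathcal{P}$ on $I$ by $\mathcal{P}(f)=\mathcal{P}_i\bigl(f\restriction(a_i,t_f)\bigr)$ when $t_f\in J_i$, and $\mathcal{P}(f)=s_0$ for a fixed $s_0\in S$ (the statement is vacuous if $S=\emptyset$) when $t_f\in\text{Fix}(\varphi)$. The main obstacle will be verifying $\langle\varphi\rangle$-anonymity of this $\mathcal{P}$: one must check that $t_f$ and $\varphi^{-n}(t_f)$ always land in the same piece --- which holds because $\varphi$ preserves each $J_i$ and fixes $\text{Fix}(\varphi)$ pointwise --- and that on a piece $J_i$ one has $(f\circ\varphi^{n})\restriction J_i=(f\restriction J_i)\circ(\varphi\restriction J_i)^{n}$, so the predicted value is unchanged by the $\langle\varphi\restriction J_i\rangle$-anonymity of $\mathcal{P}_i$ (and is the constant $s_0$ on the fixed-point piece). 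Goodness is then straightforward: for any total $F:I\to S$, the set of $t$ where $\mathcal{P}$ errs is contained in $\text{Fix}(\varphi)$ together with the union over $i$ of the $\mathcal{P}_i$-error sets of $F\restriction J_i$, which is a null set plus a countable union of null sets. Finally, the ``in particular'' clauses are immediate, since a countable set has measure zero and the zero set of $\varphi(x)-x$ is discrete --- hence countable --- whenever $\varphi$ is a non-identity analytic element of $\text{Homeo}^+(I)$.
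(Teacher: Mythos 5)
Your proposal is correct and follows essentially the same route as the paper: for (C) you use the same dichotomy (the commutator of $\varphi$ and $g$ fixes $q$, hence must be the identity by freeness of $[U,U]$), differing only in the endgame, where you note that an increasing bijection of the finite set $\text{Fix}(\varphi)$ must be the identity while the paper instead generates an infinite increasing sequence of fixed points; and for (E) you perform the identical decomposition of $I\setminus\text{Fix}(\varphi)$ into countably many $\varphi$-invariant open intervals, apply Theorem \ref{thm_MainPos} on each, and glue, with the same default value on the fixed-point set and the same null-set bookkeeping. The only step you gloss over is why every non-identity power of $\varphi\restriction J_i$ is also fixed-point-free on $J_i$ (the paper's Claim \ref{clm_IVT}, a one-line Intermediate Value Theorem argument), which is what your assertion that $\langle\varphi\restriction J_i\rangle$ acts freely on $J_i$ requires.
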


\begin{remark}
Since the assumptions of Theorem \ref{thm_MainPos} are phrased entirely in terms of continuous group actions, examples of $U$ with those properties transfer via homeomorphisms.  For example, if $I=(0,1)$ and 
\[
U = \{ x \mapsto x^\alpha \ : \ \alpha > 0 \}
\]
are as in part \ref{item_U_itself_free} of the corollary, and $\Psi: (0,1) \to \mathbb{R}$ is a homeomorphism, then 
  \[
\Psi U \Psi^{-1} =  \{ \Psi \circ \varphi \circ \Psi^{-1} \ : \ \varphi \in U   \}
  \]
is a subgroup of $\text{Homeo}^+(\mathbb{R})$ that satisfies the assumptions of Theorem \ref{thm_MainPos}.

\end{remark}

We also strengthen the ``upper frontier" from Bajpai-Velleman's Theorem \ref{thm_BV_neg}:
\begin{theorem}\label{thm_MainNeg}
There is an equivalence relation $\sim$ on $\mathbb{R}$ such that, letting $S:=\mathbb{R}/\sim$, there is \textbf{no} good $S$-predictor that is anonymous with respect to the group of infinitely Lipschitz diffeomorphisms.  (In fact we show there is no good ``weak" $S$-predictor that is anonymous with respect to this class; see Section \ref{sec_Neg} for the relevant definitions).
\end{theorem}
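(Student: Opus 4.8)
The plan is to follow the architecture of Bajpai--Velleman's proof of Theorem~\ref{thm_BV_neg}, the crucial new ingredient being that every time-distortion invoked in that argument can be taken to lie in the smaller group $G$ of \emph{infinitely Lipschitz diffeomorphisms} --- those $\varphi \in C^\infty(\mathbb{R}) \cap \text{Homeo}^+(\mathbb{R})$ such that $\varphi$, $\varphi^{-1}$ and all of their derivatives are Lipschitz (equivalently, $0 < \inf \varphi' \le \sup \varphi' < \infty$ and each $\varphi^{(k)}$ is bounded). This $G$ is a group by Fa\`a di Bruno's formula, it contains $\text{Aff}^+(\mathbb{R})$, and it is a proper subgroup of $C^\infty(\mathbb{R})\cap\text{Homeo}^+(\mathbb{R})$ (e.g.\ $x\mapsto x+e^{x}$ is excluded while $x\mapsto x+\tfrac12\sin x$ is not), so establishing the result for $G$ indeed strengthens Theorem~\ref{thm_BV_neg}. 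The first task is to fix the equivalence relation: using the Axiom of Choice I would choose $\sim$ on $\mathbb{R}$ so that $S:=\mathbb{R}/\sim$ admits a (highly discontinuous) total function $F\colon\mathbb{R}\to S$ that is ``self-similar'' along a window of positive length in a way realized by maps in $G$. Concretely, fix a model map $\Phi\in G$ which is the identity on $(-\infty,0]$, satisfies $\Phi(x)>x$ on $(0,\infty)$ and $\Phi(1)=2$, let $\tau_c(x)=x+c$, and consider the family $\varphi_u:=\tau_u\circ\Phi\circ\tau_{-u}\in G$ ($u\in\mathbb{R}$); the requirement on $F$ is that $F\bigl(\varphi_u(z)\bigr)\sim F(z)$ for all $u$ in the window and all $z<u+1$, while $F$ separates a.e.\ pair $u+1,\,u+2$ (i.e.\ $F(u+1)\not\sim F(u+2)$).

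Second, suppose for contradiction that $\mathcal P\colon{}^{\underset{\smile}{\mathbb{R}}}S\to S$ is a good $G$-anonymous predictor. $G$-anonymity gives $\mathcal P(f)=\mathcal P(f\circ\varphi)$ for all $\varphi\in G$, so whenever $t<t'$, $\varphi\in G$, $\varphi(t)=t'$, and $F(\varphi(z))\sim F(z)$ for all $z<t$ --- so that $F|_{(-\infty,t')}\circ\varphi$ and $F|_{(-\infty,t)}$ are \emph{literally the same} element of ${}^{\underset{\smile}{\mathbb{R}}}S$ --- one gets $\mathcal P\bigl(F|_{(-\infty,t)}\bigr)=\mathcal P\bigl(F|_{(-\infty,t')}\bigr)$; if in addition $\mathcal P$ is correct at $t'$ and $F(t)\not\sim F(t')$, then $\mathcal P$ is wrong at $t$. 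Applying this with $\varphi=\varphi_u$ (which is the identity on $(-\infty,u]$, so the self-similarity of $F$ gives $F(\varphi_u(z))\sim F(z)$ for \emph{all} $z<u+1$), $t=u+1$, $t'=\varphi_u(u+1)=u+2$: for every $u$ in the window with $F(u+1)\not\sim F(u+2)$, $\mathcal P$ is wrong at $u+1$ whenever it is correct at $u+2$. Since $\mathcal P$ is good it is correct at co-null many points, hence correct at $u+2$ for co-null many $u$ in the window; hence it is wrong at $u+1$ for co-null many $u$ in the window, a non-null set --- contradicting goodness. Note the ``linking'' here is just translation by $1$, so no delicacy about measures of images enters. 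The strengthening to weak predictors (Section~\ref{sec_Neg}) falls out of the same construction, by choosing $F(u+1)$ to also dodge the small set of values over which a weak predictor is permitted to hedge at $u+2$.

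The real work, and the place where I expect the difficulty to concentrate, is the very first step: producing the self-similar $F$. The relations ``$F(u+\Phi(w))\sim F(u+w)$, $w\in(0,1)$'' for varying $u$ in the window overlap and generate an equivalence relation $\approx$ on the corresponding window $V\subseteq\mathbb{R}$, on whose classes $F$ must be $\sim$-constant; one has to check that these $\approx$-classes stay ``small'', in particular that $u+1\not\approx u+2$ for a.e.\ $u$, so that the separation condition can be met simultaneously (via a choice function on the $\approx$-classes --- which is where the explicit shape of $\sim$ matters). This is exactly the step where Bajpai--Velleman had the luxury of arbitrary $C^\infty$ diffeomorphisms to decouple orbits, whereas here $\Phi$ must stay infinitely Lipschitz; so one must instead arrange $\sim$ so that precisely the right discontinuities of $F$ are permitted --- or, failing a direct construction from a single $\Phi$, replace it by a carefully chosen family of conjugates $\psi\Phi\psi^{-1}$ ($\psi\in G$) to keep the $\approx$-classes under control. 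Verifying that this can be done --- equivalently, that the $\approx$-classes genuinely remain small while all witnessing maps stay in $G$ --- is the heart of the matter, and absorbs the bulk of the proof.
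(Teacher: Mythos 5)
There is a genuine gap, and it sits exactly where you locate it: the equivalence relation $\sim$ and the ``self-similar'' $F$ are never produced, and the specific plan you sketch for producing them is in real danger of failing. Your identifications are $u+w \approx u+\Phi(w)$ for $w\in(0,1)$ and $u$ ranging over a window $W$ of \emph{positive measure} (positive measure is forced, since your contradiction needs ``wrong at $u+1$ for co-null many $u\in W$'' to be a non-null set). Writing $\psi=\Phi-\mathrm{id}$, one step of the generated relation from a point $p$ reaches $p+\psi(w)$ for every $w\in(0,1)$ with $p-w\in W$; since $W$ contains an interval and $\psi$ is continuous and non-constant, the $\approx$-class of $p$ already contains an \emph{interval} after one step, and iterating these steps (and their inverses) tends to propagate the class right up to, and past, $p+1$. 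So the two properties you need simultaneously --- $F$ constant on $\approx$-classes, yet $F(u+1)\neq F(u+2)$ for positively many $u$ --- are not available ``by Choice''; they require an argument that the generated relation does not collapse, and the one-parameter family of translates of a single $\Phi$ is the worst possible generating set for that purpose. A second, independent problem is the weak-predictor claim: a weak predictor sees $F\restriction(\mathbb{R}\setminus\{u+1\})$, so your anonymity step needs $F\circ\varphi_u=F$ off the single point $u+1$, \emph{including on $(u+1,\infty)$}, where $\varphi_u$ genuinely moves points; your self-similarity requirement only covers $z<u+1$, so the weak case does not ``fall out.''

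The paper's route avoids both problems by inverting the quantifiers. First (Lemma \ref{lem_BlockingEquivRel} and Corollary \ref{cor_BlockingFunctions}): fix a \emph{countable} family $\mathcal{F}$ of partial injections with the property that for \emph{every} pair $(x,y)$ there is $\varphi\in D^\infty_{\text{Lipschitz}}$ with $\varphi(x)=y$ which agrees off $x$ with members of $\mathcal{F}$; let $\sim$ be generated by $\mathcal{F}$. Countability of $\mathcal{F}$ makes every $\sim$-class countable (hence null) for free --- no collapse analysis needed --- and the single function $E(x)=[x]_\sim$ defeats every anonymous weak predictor, since anonymity forces $\mathcal{P}(E\setminus\{x\})$ to be constant in $x$ while $E$ takes each value only on a null set. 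The hard analysis is then confined to building, for each target $(w,z)$, one map $G\in D^\infty_{\text{Lipschitz}}$ through $(w,z)$ glued from countably many members of $\mathcal{F}$: the family consists of rational lines together with transition pieces $F_{A,\Delta,\gamma}(x)=q+\Delta\int_{[0,(x-p)/\Delta]}(1+\gamma b)$ ($b$ a fixed bump, $A,\Delta,\gamma$ rational), which have derivative $1$ and vanishing higher derivatives at their endpoints; Lemma \ref{lem_RationalPointsApproac} chooses the step sizes $\Delta_n$ and bump heights $\gamma_n$ so that $\gamma_n/\Delta_n^{\,n-1}\to 0$, which keeps $G'\to 1$ and $G^{(k)}\to 0$ as the pieces shrink into $(w,z)$. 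That derivative control is precisely what replaces Bajpai--Velleman's flat (vanishing-first-derivative) approach, which is unavailable once the inverse must also be infinitely Lipschitz --- this is the honest content of the strengthening, and it is the part your proposal defers entirely.
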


Section \ref{sec_Holder} provides the relevant background about H\"older's Theorem, Section \ref{sec_Pos} proves Theorem \ref{thm_MainPos}, Section \ref{sec_Cor} proves Corollary \ref{cor_Pos}, and Section \ref{sec_Neg} proves Theorem \ref{thm_MainNeg}.  \emph{The only prerequisites are very basic group theory and real analysis; in particular, no set-theoretic background whatsoever is assumed of the reader}.  The only set-theoretic ingredient is the use of a wellorder in the proof of Theorem \ref{thm_MainPos} in Section \ref{sec_Pos}.  A \textbf{wellorder} is a linear order with no infinite descending chain.  The Axiom of Choice ensures that every set can be wellordered, which we will use exactly once (on page \pageref{item_GroupII}) to obtain a wellorder of the set
\[
{}^I S := \{ F \ : \ F \text{ is a function from } I \text{ into } S \}.
\]

By \textbf{wellordered set of reals} we will mean a set $X \subset \mathbb{R}$ such that $(X,<)$ is a wellorder, where $<$ denotes the usual ordering of $\mathbb{R}$.  The following fact is key (see \cite[Corollary 3.4]{MR2384262}):
\begin{fact}\label{fact_WO_MeasureZero}
Every wellordered set of reals is countable (hence, has Lebesgue measure zero).
\end{fact}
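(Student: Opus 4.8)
The plan is to combine two standard facts: a wellorder has immediate successors, and $\mathbb{Q}$ is dense in $\mathbb{R}$. Let $X \subset \mathbb{R}$ be wellordered by the usual order $<$. First I would note that every non-maximal $x \in X$ has an immediate successor in $X$: the set $\{y \in X : y > x\}$ is a nonempty subset of the wellorder $(X,<)$, so it has a least element $x^+$, and then $x < x^+$ while no member of $X$ lies strictly between $x$ and $x^+$. To extract ``nonempty subsets have least elements'' from the definition of wellorder used in the paper (a linear order with no infinite descending chain), I would argue contrapositively: from a nonempty subset lacking a least element one recursively builds an infinite descending chain, using only dependent choice (which is harmless in the ambient ZFC; if one instead takes the least-element property as the definition, the argument below is entirely choice-free).

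Next I would look at the open intervals $J_x := (x, x^+) \subseteq \mathbb{R}$, indexed by the non-maximal $x \in X$. Each $J_x$ is nonempty since $x < x^+$ in $\mathbb{R}$, and the $J_x$ are pairwise disjoint: if $x < x'$ are both non-maximal and some real $z$ lies in $J_x \cap J_{x'}$, then $z > x'$ and $z < x^+$ force $x < x' < x^+$ with $x' \in X$, contradicting the defining property of $x^+$. Since every nonempty open interval contains a rational, assigning to each non-maximal $x$ the rational of least index in $J_x$ under a fixed enumeration of $\mathbb{Q}$ defines an injection from the set of non-maximal elements of $X$ into $\mathbb{Q}$ (injectivity being exactly disjointness of the $J_x$). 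Hence the set of non-maximal elements of $X$ is countable, and since $X$ has at most one further element (its maximum, if any), $X$ is countable.

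Finally, a countable subset of $\mathbb{R}$ is a countable union of singletons, hence Lebesgue-null, which gives the parenthetical claim. There is no real obstacle here; the only step calling for a moment's care is the existence of immediate successors, handled in the first paragraph — everything else is bookkeeping about disjoint intervals and rationals.
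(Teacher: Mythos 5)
Your proof is correct and is exactly the standard argument (immediate successors exist, the gaps $(x,x^+)$ are disjoint nonempty open intervals, inject into $\mathbb{Q}$); the paper does not prove this fact itself but defers to \cite[Corollary 3.4]{MR2384262}, which proceeds the same way. Your care about deriving the least-element property from the ``no infinite descending chain'' definition, and about choosing rationals canonically via a fixed enumeration, is appropriate and complete.
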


\section{H\"older's Theorem and its consequences}\label{sec_Holder}

An \textbf{Archimedean ordered group} is a triple $(G,\cdot, <)$ such that $(G,\cdot)$ is a group, $<$ is a linear order on $G$, and:
\begin{enumerate}
 \item  $<$ is both left and right invariant with respect to the group operation, meaning that for all $a,b,c \in G$:\footnote{The invariance requirements look more natural when one uses additive instead of multiplicative notation.  But in applications involving Archimedean orders one cannot usually know, a priori, that the group is abelian.}  
 \[
  a < b \implies ca < cb  \ \text{ and } \ ac < bc; 
  \]
  
 \item whenever $1 < a < b$, there is an $n \in \mathbb{N}$ such that $b < a^n$.
\end{enumerate}  
We also say that an (un-ordered) group $(G,\cdot)$ is \textbf{Archimedean} if there exists a linear order $<$ on $G$ such that $(G,\cdot,<)$ is an Archimedean ordered group.

\begin{theorem}[H\"older's Theorem; see Chapter IV, Theorem 1 of Fuchs~\cite{MR0171864}]\label{thm_Holder}
Every Archimedean group is abelian.\footnote{H\"older actually proved more; namely, that every Archimedean group is isomorphic to a subgroup of $(\mathbb{R},+)$.  We will not need this additional fact, however.}
\end{theorem}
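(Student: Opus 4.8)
The plan is to build, for any nontrivial Archimedean ordered group $(G,\cdot,<)$, an injective order-preserving homomorphism $\varphi\colon G\to(\mathbb{R},+)$. Then $G$ is isomorphic to a subgroup of $(\mathbb{R},+)$, hence abelian; and since the hypothesis of the theorem asserts only that \emph{some} compatible order exists, fixing one such order and producing the embedding is enough. The construction is a ``logarithm base $a$'': fix a gauge element $a>1$ (one exists since any $g\ne1$ has $g>1$ or $g^{-1}>1$), and for $g\in G$, $n\in\mathbb{N}^{+}$, let $m_n(g)$ be the largest integer $m$ with $a^{m}\le g^{n}$ — the Archimedean axiom is exactly what makes this set of $m$'s nonempty and bounded above, so $a^{m_n(g)}\le g^{n}<a^{m_n(g)+1}$. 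Then put $\varphi(g):=\lim_{n\to\infty}m_n(g)/n$.

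First I would dispatch everything except additivity. Bi-invariance gives $a^{m_n(g)+m_{n'}(g)}\le g^{n+n'}$, so $(m_n(g))_n$ is superadditive; with a uniform linear upper bound (again from the axiom), Fekete's lemma yields that $\varphi(g)=\sup_n m_n(g)/n$ exists and is finite. Then one checks: $\varphi(a)=1$; $\varphi$ is monotone (from $g\le h\Rightarrow g^n\le h^n\Rightarrow m_n(g)\le m_n(h)$); $\varphi(g^{k})=k\varphi(g)$ for $k\in\mathbb{Z}$ (using $(g^{k})^{n}=g^{kn}$; this reduces matters to $g\ge1$ and pins down $\varphi$ on elements $<1$ by $\varphi(g^{-1})=-\varphi(g)$); and $\varphi(g)=0\iff g=1$ (if $g>1$, the axiom gives $N$ with $a\le g^{N}$, hence $a^{k}\le g^{kN}$ and $\varphi(g)\ge1/N>0$). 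Granting additivity, $\varphi$ is then a homomorphism into $(\mathbb{R},+)$ with trivial kernel, hence injective.

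The hard part will be additivity, $\varphi(gh)=\varphi(g)+\varphi(h)$ (it suffices to treat $g,h\ge1$). Bi-invariance cleanly gives $a^{m_n(g)+m_n(h)}\le g^{n}h^{n}<a^{m_n(g)+m_n(h)+2}$, so it would be enough to know that $(gh)^{n}$ and $g^{n}h^{n}$ differ by only a bounded number of factors of $a$, uniformly in $n$. The obstacle is precisely that $(gh)^{n}\ne g^{n}h^{n}$ in a group not \emph{yet} known to be abelian: unwinding $(gh)^{-n}g^{n}h^{n}$ produces a product of about $n$ conjugated commutators $[g^{\pm1},h^{\pm1}]$, so a priori the discrepancy could grow with $n$. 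Overcoming this is where the real content of the Archimedean axiom is spent: any two elements $>1$ are mutually Archimedean-comparable (each below some power of the other) and conjugates of positive elements are positive, and one must leverage this together with bi-invariance to show the accumulated commutator discrepancy stays bounded independently of $n$ — equivalently, that $\varphi$ is conjugation-invariant and commutators are ``$\varphi$-negligible.'' Dividing by $n$ and letting $n\to\infty$ then kills the discrepancy and gives additivity. (The endgame can also be phrased directly: a commutator $c=[g,h]>1$ would, by boundedness of its conjugates, be squeezed below every positive element and hence down to $1$ against the gauge $a$; controlling conjugates is the crux either way.)

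As a reassuring degenerate check, when the positive cone has a least element $a$, the same $\varphi$ is an isomorphism onto $(\mathbb{Z},+)$ — every $g>1$ has $a^{m}\le g<a^{m+1}$, forcing $1\le a^{-m}g<a$ and hence $a^{-m}g=1$, i.e.\ $g=a^{m}$ — so the cyclic case is transparent and all the difficulty lives in the densely ordered case.
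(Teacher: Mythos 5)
The paper itself offers no proof of H\"older's Theorem---it is quoted from Fuchs---so the only question is whether your argument stands on its own, and it does not: the one step you explicitly defer, additivity of $\varphi$ (equivalently, control of the commutator discrepancy between $(gh)^n$ and $g^nh^n$), is the entire content of the theorem. Everything you actually carry out---well-definedness of $m_n(g)$ from the Archimedean axiom, superadditivity and Fekete, monotonicity, $\varphi(g^k)=k\varphi(g)$, triviality of the kernel, and the discrete case---is correct but is the routine part; at the crux you write only that ``one must leverage this together with bi-invariance to show the accumulated commutator discrepancy stays bounded,'' without exhibiting the argument. Your parenthetical fallback is likewise unproven and not obvious: for $c>1$ and $x>c$ the Archimedean axiom yields $xcx^{-1}<c^{\,n+1}$ only for an $n$ depending on $x$, so ``boundedness of the conjugates'' is not free, and proving it is essentially as hard as the theorem.

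The standard repair proves commutativity first, bypassing $\varphi$. If the positive cone has a least element, your own degenerate check shows $G$ is cyclic, hence abelian. Otherwise, for every $c>1$ there is $e$ with $1<e$ and $e^2<c$: take $1<d<c$; if $d^2\le c$, any smaller positive $d'$ has $d'^2<d^2\le c$; if $d^2>c$, then $e:=d^{-1}c$ satisfies $1<e<d$ and $e^2=d^{-1}(cd^{-1})c<d^{-1}dc=c$, using $cd^{-1}<d$. Now suppose some $g,h>1$ satisfy $c:=(gh)^{-1}(hg)>1$ (the general case reduces to this by replacing $g$ or $h$ with its inverse and/or swapping them, since commuting is unaffected). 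Pick $e>1$ with $e^2<c$ and the largest integers $m,n$ with $e^m\le g<e^{m+1}$ and $e^n\le h<e^{n+1}$. Bi-invariance gives $e^{m+n}\le gh$ and $hg<e^{m+n+2}$, hence $c=(gh)^{-1}(hg)<e^{-(m+n)}e^{m+n+2}=e^2<c$, a contradiction. With commutativity in hand, your construction of $\varphi$ then does become additive and recovers the stronger embedding statement mentioned in the paper's footnote.
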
 

H\"older's Theorem is relevant for the study of continuous group actions on the reals:

\begin{theorem}[folklore; special case of Theorem 4 of \cite{MR1908556}]\label{thm_FolkAppHolder}
Suppose $I$ is an open interval of real numbers and $V$ is a subgroup of $\text{Homeo}^+(I)$ that acts freely on $I$.    Then the binary relation $\prec$ on $V$ defined by
\[
\varphi \prec \psi \ :\iff \ \ \forall x \in I \ \varphi(x) < \psi(x)
\]
is an Archimedean order on $V$.  Consequently, by H\"older's Theorem, $V$ is abelian.
\end{theorem}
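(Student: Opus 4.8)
The plan is to verify directly that $\prec$ is a linear order on $V$ that is both left- and right-invariant, and that it satisfies the Archimedean property; then H\"older's Theorem delivers the conclusion. I would organize the argument around the single crucial consequence of freeness: if $\varphi, \psi \in V$ and $\varphi(x_0) < \psi(x_0)$ for even one point $x_0 \in I$, then $\varphi(x) < \psi(x)$ for \emph{all} $x \in I$. This ``comparison'' fact is what makes $\prec$ total and is the engine behind everything else.

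\medskip

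\emph{Step 1: the comparison lemma.} Suppose $\varphi, \psi \in V$ with $\varphi \neq \psi$. Consider $\eta := \psi^{-1} \circ \varphi \in V$; since $\eta$ is not the identity and $V$ acts freely, $\eta$ has no fixed point, so $\eta(x) - x$ is a continuous function on the interval $I$ that never vanishes, hence has constant sign by the Intermediate Value Theorem. Thus either $\eta(x) > x$ for all $x$ or $\eta(x) < x$ for all $x$. Translating back through the increasing homeomorphism $\psi$ (apply $\psi$ to both sides of $\eta(x) \lessgtr x$, using that $\psi$ is increasing, after substituting $x \mapsto \varphi^{-1}$ appropriately — more cleanly: $\varphi(y) \lessgtr \psi(y)$ for all $y$ according to the sign), we get that exactly one of $\varphi \prec \psi$ or $\psi \prec \varphi$ holds. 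This gives trichotomy; transitivity of $\prec$ is immediate from transitivity of $<$ pointwise. So $(V, \prec)$ is a linear order.

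\medskip

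\emph{Step 2: bi-invariance.} Left-invariance: if $\varphi \prec \psi$, i.e. $\varphi(x) < \psi(x)$ for all $x$, then for any $\chi \in V$ we have $\chi(\varphi(x)) < \chi(\psi(x))$ for all $x$ because $\chi$ is strictly increasing; hence $\chi\varphi \prec \chi\psi$. Right-invariance: from $\varphi(x) < \psi(x)$ for all $x$, substitute $x = \chi(y)$ (as $\chi$ is a bijection of $I$, $y$ ranges over all of $I$) to get $\varphi(\chi(y)) < \psi(\chi(y))$ for all $y$, i.e. $\varphi\chi \prec \psi\chi$. (Here I should double-check the convention for the group operation on $\text{Homeo}^+(I)$ — composition — and that $1 \prec \varphi$ means $\varphi(x) > x$ everywhere, matching the sign bookkeeping in Step 1.)

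\medskip

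\emph{Step 3: the Archimedean property, and conclusion.} Suppose $1 \prec \varphi \prec \psi$; I must find $n \in \mathbb{N}$ with $\psi \prec \varphi^n$. Fix any basepoint $x_0 \in I$. Since $1 \prec \varphi$, the orbit $x_0 < \varphi(x_0) < \varphi^2(x_0) < \cdots$ is strictly increasing; the key point is that this sequence is \emph{cofinal} in $I$ (it cannot converge to an interior limit $\ell \in I$, for then $\ell$ would be a fixed point of $\varphi$ by continuity, contradicting freeness — and if $I$ is bounded above, cofinality means the sequence tends to $\sup I$). Hence there is $n$ with $\varphi^n(x_0) > \psi(x_0)$, which by the comparison lemma of Step 1 upgrades to $\varphi^n(y) > \psi(y)$ for all $y \in I$, i.e. $\psi \prec \varphi^n$. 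Therefore $(V, \cdot, \prec)$ is an Archimedean ordered group, and H\"older's Theorem (Theorem \ref{thm_Holder}) yields that $V$ is abelian.

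\medskip

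\emph{Main obstacle.} The only genuinely delicate point is the cofinality claim in Step 3: I need that a fixed-point-free increasing homeomorphism ``pushes every point all the way to the end of $I$'' under iteration. The argument is that $\lim_n \varphi^n(x_0) = \sup\{\varphi^n(x_0)\}$ is either $\sup I$ or an interior point fixed by $\varphi$; freeness kills the latter. One must handle both the bounded and unbounded cases for $I$ uniformly, but this is routine. Everything else (trichotomy via IVT, invariance via monotonicity and bijectivity) is a short check.
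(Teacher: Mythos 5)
Your proposal is correct and follows essentially the same route as the paper: the comparison lemma of your Step 1 is exactly the paper's observation (via the Intermediate Value Theorem and freeness) that the pointwise relation at a single basepoint $x_0$ agrees with the global relation $\prec$, and your cofinality argument for the Archimedean property — any interior limit of the orbit $\varphi^n(x_0)$ would be a fixed point of $\varphi$ — is the paper's argument verbatim. No gaps.
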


Since Theorem \ref{thm_FolkAppHolder} is so central to the paper (in the key Lemma \ref{lem_KeyLemmaHoelder}), we give a sketch of the proof.  Fix any $x_0 \in I$, and define the relation $\prec_{x_0}$ on $V$ by:  $\varphi \prec_{x_0} \psi$ if $\varphi(x_0) < \psi(x_0)$.  The relation $\prec_{x_0}$ is clearly transitive and antisymmetric.  It is total, because if $\varphi,\psi \in V$ and $\varphi(x_0) = \psi(x_0)$ then $\psi^{-1} \circ \varphi$ fixes $x_0$, and hence (since $V$ acts freely on $I$) $\psi^{-1} \circ \varphi = \text{id}$; i.e., $\varphi=\psi$.  So $\prec_{x_0}$ is a linear order on $V$.  The Intermediate Value Theorem, together with the assumption that $V$ acts freely on $I$, ensure that $\prec_{x_0}$ is the same as the relation $\prec$ defined in the statement of Theorem \ref{thm_FolkAppHolder}. This relation is also left and right invariant with respect to composition (on $V$).  Finally, to see that $\prec_{x_0}$ is Archimedean, suppose $\text{id} \prec_{x_0} \varphi \prec_{x_0} \psi$ with $\varphi,\psi \in V$.  Then $\text{id} \prec \varphi$, and hence $\langle \varphi^n(x_0) \ : \ n \in \mathbb{N} \rangle$ is a strictly increasing sequence.  So it either converges to a point in $I$, or to ``$+\infty$" (the right endpoint of $I$).  It cannot converge to a point in $I$, because if it did, that limit would be a fixed point of the continuous (non-identity) function $\varphi$, contradicting that $\varphi  \in V$ and $V$ acts freely.  So $\lim_{n \to \infty} \varphi^n(x_0) = \infty$; hence, there is some $n$ such that $\psi(x_0) < \varphi^n(x_0)$.  Then $\psi \prec_{x_0} \varphi^n$, which is equivalent to saying $\psi \prec \varphi^n$.

\section{Proof of Theorem \ref{thm_MainPos}}\label{sec_Pos}

In this section we prove Theorem \ref{thm_MainPos}.  If $I$ is an open interval of real numbers, the lower and upper endpoints of $I$ will be denoted $-\infty$ and $\infty$, respectively.  If $S$ is a nonempty set, $\boldsymbol{{}^I S}$ denotes the set of total functions from $I$ to $S$, and $\boldsymbol{{}^{\underset{\smile}{I}} S}$ denotes the set of functions of the form $f: (-\infty,t_f) \to S$ for some $t_f \in I$.  Uppercase and lowercase letters will be used for members of ${}^I S$ and ${}^{\underset{\smile}{I}} S$, respectively, but note that if $F \in {}^I S$ and $t \in I$, then $F|_{(-\infty,t)}$ is a member of ${}^{\underset{\smile}{I}} S$.

The notions of good and anonymous $S$-predictors in this context are the obvious generalizations of the definitions given in the introduction (which were for the special case $I=\mathbb{R}$); i.e, a good $S$-predictor is a function $\mathcal{P}: {}^{\underset{\smile}{I}} S \to S$ such that for all $F \in {}^I S$, 
\[
\left\{ t \in I \ : \ \mathcal{P}\Big( F|_{(-\infty,t)} \Big) \ne F(t) \right\} \text{ has Lebesgue measure zero.} 
\]
If $U \subseteq \text{Homeo}^+(I)$, we say that a function $\mathcal{P}$ with domain ${}^{\underset{\smile}{I}} S$ is $U$-anonymous if $\mathcal{P}(f) = \mathcal{P}(f \circ \varphi)$ for every $f \in {}^{\underset{\smile}{I}} S$ and every $\varphi \in U$.

Lowercase Greek letters are reserved for elements of $\text{Homeo}^+(I)$.  If $\varphi \in \text{Homeo}^+(I)$ and $n \in \mathbb{Z}$, $\varphi^n$ denotes the $n$-fold composition of $\varphi$ if $n \ge 1$, $\text{id}_I$ if $n=0$, and the $|n|$-fold composition of $\varphi^{-1}$ if $n < 0$.  

 If $F:I \to S$ and $\varphi \in \text{Homeo}(I)$, we say that \textbf{$\boldsymbol{F}$ is $\boldsymbol{\varphi}$-invariant} if $F = F \circ \varphi$, and \textbf{$\boldsymbol{F}$ is $\boldsymbol{\varphi}$-invariant before $\boldsymbol{t}$} if 
\[
F \restriction (-\infty,t) = F \circ \varphi \restriction (-\infty,t).
\]
We say that \textbf{$\boldsymbol{F}$ is past $\boldsymbol{\varphi}$-invariant} if there is some $t \in I$ such that $F$ is $\varphi$-invariant before $t$.

\begin{lemma}[analogue of Lemmas 2 and 3 of \cite{MR3552748}]\label{lem_EasyExtend}
Suppose $F \in {}^I S$, $\varphi \in \text{Homeo}^+(I)$, and $F$ is $\varphi$-invariant before $t$.  Then there is an $H:I \to S$ that extends $F|_{(-\infty,t)}$ and is $\varphi$-invariant.

\end{lemma}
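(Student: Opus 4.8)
The plan is to define $H$ on $I$ by "pushing forward" the given initial segment $F\restriction(-\infty,t)$ along the iterates of $\varphi$, in a way that is forced by the requirement $H=H\circ\varphi$. The key observation is that the set $\{\varphi^n(s) : n\in\mathbb Z,\ s\in(-\infty,t)\}$ decomposes $I$ into $\varphi$-orbits, and on each orbit we are free (in fact forced, once we decide the value at one representative) to make $H$ constant along the orbit. Concretely, I would first dispose of the trivial case $t=-\infty$ or $\varphi=\mathrm{id}$, and otherwise analyze the dynamics of $\varphi$ on $I$: comparing $\varphi(x)$ with $x$, the interval $I$ splits into the fixed-point set of $\varphi$ and the open intervals between consecutive fixed points, on each of which $\varphi$ is either strictly increasing-towards-the-right-endpoint or strictly decreasing-towards-the-left-endpoint. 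On each such component the orbit of any point is cofinal (and coinitial) in the appropriate direction, so every $\varphi$-orbit in the component meets $(-\infty,t)$; this is what lets us transport the values of $F$.

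The key steps, in order: (1) Handle $t=-\infty$ or $\varphi=\mathrm{id}$ trivially (take $H=F$, or $H$ any $\varphi$-invariant extension, which is immediate). (2) For a general $\varphi$ and $t\in I$, define $H(x):=F\bigl(\varphi^{n}(x)\bigr)$ where $n=n(x)\in\mathbb Z$ is chosen so that $\varphi^{n}(x)<t$; I need to show such an $n$ exists for every $x\in I$. For $x<t$ take $n=0$. For $x\ge t$, note $x$ cannot be a fixed point of $\varphi$ unless $x=t$ is itself isolated in the complement — more carefully, $x$ lies in some component of $I\setminus\mathrm{Fix}(\varphi)$ on which the $\varphi$-iterates of $x$ are strictly monotone and tend to an endpoint of that component; since $t$ lies strictly above the left endpoint of that component (as $(-\infty,t)$ already contains points of it, using that $F$ is $\varphi$-invariant before $t$ and $t\in I$), the decreasing iterates $\varphi^{-k}(x)$ eventually drop below $t$, giving the desired $n$. (A small separate argument handles $x=t$ and fixed points $x\ge t$, but a fixed point $x$ of $\varphi$ forces $\varphi^n(x)=x$ for all $n$, so we instead need $x<t$ — if $x=t$ is a fixed point we simply set $H(t)=F(t)$ and note it is automatically $\varphi$-invariant there; if $x>t$ is a fixed point, it cannot have iterates below $t$, so this case requires that no fixed point of $\varphi$ exceeds... which need not hold, so the cleaner fix is: define $H$ orbit-by-orbit, choosing for each orbit a representative and, when the orbit meets $(-\infty,t)$, using the $F$-value there, and when it does not, using $F(x_{\mathrm{orbit}})$ for an arbitrary representative.) (3) Check well-definedness: if $\varphi^{m}(x)<t$ and $\varphi^{n}(x)<t$, then since $F$ is $\varphi$-invariant before $t$ we get $F(\varphi^{m}(x))=F(\varphi^{n}(x))$ by iterating the relation $F\restriction(-\infty,t)=F\circ\varphi\restriction(-\infty,t)$ along the chain between $\varphi^m(x)$ and $\varphi^n(x)$ (all intermediate iterates also lie below $t$ because the iterates between two points below $t$ in a monotone orbit stay below $t$). (4) Check $H$ extends $F\restriction(-\infty,t)$: for $x<t$, $n(x)=0$ works. (5) Check $H=H\circ\varphi$: $n(\varphi(x))=n(x)-1$ works as an admissible exponent, so $H(\varphi(x))=F(\varphi^{n(x)-1}(\varphi(x)))=F(\varphi^{n(x)}(x))=H(x)$.

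The main obstacle is step (2): showing every $\varphi$-orbit in $I$ either meets $(-\infty,t)$ or can be dealt with by picking an arbitrary representative. The subtlety is that fixed points of $\varphi$ lying at or above $t$ have orbits that never enter $(-\infty,t)$ — but such an orbit is a single point $\{x\}$ with $x\ge t$, and setting $H(x):=F(x)$ is consistent both with $\varphi$-invariance (trivially, since $\varphi(x)=x$) and, when $x=t$, harmless; when $x>t$ there is no constraint from "extending $F\restriction(-\infty,t)$" at all. Likewise, for a component of $I\setminus\mathrm{Fix}(\varphi)$ entirely contained in $[t,\infty)$, no orbit meets $(-\infty,t)$, and we just pick one representative per orbit and copy the $F$-value; $\varphi$-invariance of $H$ on that component is then automatic. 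So the honest structure of the proof is: partition $I$ into $\varphi$-orbits; for each orbit $\mathcal O$, if $\mathcal O\cap(-\infty,t)\ne\varnothing$ pick $x_{\mathcal O}\in\mathcal O\cap(-\infty,t)$, otherwise pick $x_{\mathcal O}\in\mathcal O$ arbitrarily; set $H\restriction\mathcal O\equiv F(x_{\mathcal O})$. Then $H$ is $\varphi$-invariant by construction, and it extends $F\restriction(-\infty,t)$ precisely because $F$ is $\varphi$-invariant before $t$ (so for $x<t$, $F(x)=F(x_{\mathcal O})$ whenever $x_{\mathcal O}\in\mathcal O\cap(-\infty,t)$, by the chaining argument of step (3)). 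I expect the write-up to spend most of its effort making the chaining argument in step (3) precise, i.e. proving the lemma: \emph{if $s,s'$ lie in the same $\varphi$-orbit and both lie in $(-\infty,t)$, then $F(s)=F(s')$}, which reduces by transitivity to the one-step case $s'=\varphi(s)$ together with the fact that the orbit segment between $s$ and $s'$ stays inside $(-\infty,t)$ — and that last fact is where the monotonicity of $\varphi$ on components of $I\setminus\mathrm{Fix}(\varphi)$ is used.
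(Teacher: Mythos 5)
Your final construction---partition $I$ into $\varphi$-orbits, prove via the monotonicity of the iterate sequence $\bigl(\varphi^k(x)\bigr)_k$ that $F$ is constant on each orbit's intersection with $(-\infty,t)$, and define $H$ to be constant on each orbit with that value (choosing arbitrarily on orbits that miss $(-\infty,t)$)---is correct and is essentially the paper's own proof, which phrases the orbit decomposition as an equivalence relation and uses a single fixed $s^*$ on classes missing $(-\infty,t)$. The initial ``push forward by $\varphi^{n(x)}$'' attempt and its complications with fixed points are a detour, but you correctly abandon it for the orbit-by-orbit argument.
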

\begin{proof}
Since $\varphi$ is invertible, the relation 
		\[ x\sim z  \ :\iff  \ \exists  n \in \mathbb{Z} \  \varphi^n(x)=z\]
is an equivalence relation on $I$.  Suppose $x \sim z$; without loss of generality say $n \ge 0$ and $\varphi^n(x) = z$.  Then, because $\varphi$ is order preserving, the sequence $\Big( \varphi^k(x) \Big)_{k \ge 0}$ is monotonic (increasing if $x \le \varphi(x)$, decreasing if $x \ge \varphi(x)$).  In particular, if both $x$ and $z$ are less than $t$, then so is $\varphi^k(x)$ for every integer $k$ between $0$ and $n$.  Since $F$ is $\varphi$-invariant before $t$, it follows that $F(x) = F(\varphi^k(x))$ for each $k$ between $0$ and $n$; in particular, $F(x)=F(z)$.  

Hence, for any equivalence class $C$, if $C \cap (-\infty,t) \ne \emptyset$, then the restriction of $F$ to $C \cap (-\infty,t)$ is constant; let $s_C$ denote this constant value.  Now fix any $s^* \in S$, and define $H:I \to S$ by
\[
H(y)=
\begin{cases}
s_{[y]} & \text{ if } [y] \cap (-\infty,t) \ne \emptyset \\
s^* & \text{ otherwise,}
\end{cases}
\]
where $[y]$ denotes the equivalence class of $y$.  Then $H$ extends $F|_{(-\infty,t)}$ and, since $[y]=[\varphi(y)]$ for every $y \in I$, $H$ is $\varphi$-invariant.
\end{proof}

The next lemma is the key use of free actions and H\"older's Theorem:
\begin{lemma}[analogue of Lemma 4 of \cite{MR3552748}]\label{lem_KeyLemmaHoelder}
Suppose $V$ is a subgroup of $\text{Homeo}^+(I)$ and $V$ acts freely on $I$.  Suppose $F: \mathbb{R} \to S$ is invariant with respect to some non-identity member of $V$.  Then for any $\varphi \in V$:  if $F$ is past $\varphi$-invariant, then $F$ is (fully) $\varphi$-invariant.
\end{lemma}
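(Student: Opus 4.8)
The plan is to derive the full $\varphi$-invariance of $F$ from its past $\varphi$-invariance by using the abelianness furnished by H\"older's Theorem (via Theorem~\ref{thm_FolkAppHolder}) to ``slide'' any point of $I$ backwards, into the region where $F$ and $F\circ\varphi$ already agree, using iterates of a fixed non-identity element of $V$ under which $F$ is invariant.

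First I would fix a non-identity $\psi\in V$ with $F=F\circ\psi$, and fix $t\in I$ witnessing that $F$ is past $\varphi$-invariant, so $F\restriction(-\infty,t)=(F\circ\varphi)\restriction(-\infty,t)$. By Theorem~\ref{thm_FolkAppHolder}, $V$ carries the Archimedean order $\prec$ and in particular is abelian, so $\varphi$ commutes with every integer power of $\psi$. From $F=F\circ\psi$ one gets $F=F\circ\psi^{-1}$ and hence $F=F\circ\psi^{n}$ for all $n\in\mathbb{Z}$; and since $\psi$ witnesses invariance of $F$ iff $\psi^{-1}$ does, I may assume without loss of generality that $\text{id}\prec\psi$, which unwinds to the statement that $x<\psi(x)$ for every $x\in I$.

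The key geometric step is then: for any $y\in I$, the sequence $\big(\psi^{-n}(y)\big)_{n\ge 0}$ is strictly decreasing, and it cannot converge to a point of $I$, since by continuity such a limit would be a fixed point of the non-identity homeomorphism $\psi$, contradicting that $V$ acts freely on $I$. Hence $\psi^{-n}(y)$ decreases to the left endpoint $-\infty$ of $I$, so $\psi^{-n}(y)<t$ for all sufficiently large $n$. Running this argument for $y$ and for $\varphi(y)$ simultaneously, I can choose a single $n$ with $\psi^{-n}(y)<t$ and $\psi^{-n}(\varphi(y))<t$.

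Finally, set $x:=\psi^{-n}(y)$. Commutativity gives $\varphi(x)=\varphi\big(\psi^{-n}(y)\big)=\psi^{-n}\big(\varphi(y)\big)$, so $x<t$ and $\varphi(x)<t$, whence $F(x)=F(\varphi(x))$ by $\varphi$-invariance before $t$. Using $F=F\circ\psi^{n}$ and commutativity once more,
\[
F(y)=F\big(\psi^{n}(x)\big)=F(x)=F(\varphi(x))=F\big(\psi^{n}(\varphi(x))\big)=F\big(\varphi(\psi^{n}(x))\big)=F(\varphi(y)),
\]
and since $y\in I$ was arbitrary, $F=F\circ\varphi$. The one point needing care — and the only place the free-action hypothesis is used — is the claim that the backward $\psi$-orbit of a point escapes every proper initial segment of $I$; this is precisely the monotone-convergence-plus-Intermediate-Value argument already sketched after Theorem~\ref{thm_FolkAppHolder}, so I expect it to be the main (though mild) obstacle.
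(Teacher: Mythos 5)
Your proposal is correct and follows essentially the same route as the paper's proof: fix an invariant non-identity $\psi$, use Theorem~\ref{thm_FolkAppHolder} to get abelianness and to orient $\psi$, use the free action (via the no-fixed-point/continuity argument) to push a point below $t$ along the $\psi$-orbit, and then conclude with the same four-step chain of equalities. The only cosmetic differences are that you normalize $\text{id}\prec\psi$ and iterate $\psi^{-1}$ where the paper normalizes $\psi\prec\text{id}$ and iterates $\psi$, and you impose the superfluous extra condition $\psi^{-n}(\varphi(y))<t$, which is harmless since $\varphi$-invariance before $t$ only constrains the input $z$, not $\varphi(z)$.
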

\begin{proof}
By assumption there is a $\psi \in V$ such that $\psi \ne \text{id}$ and $F$ is $\psi$-invariant.  Now suppose $\varphi \in V$ and $F$ is past $\varphi$-invariant; recall this means there is some $t \in I$ such that $F$ is $\varphi$-invariant before $t$ (i.e, if $z < t$ then $F(\varphi(z)) = F(z)$).  Fix any $x \in I$; we want to show $F(\varphi(x)) = F(x)$.  Since $\psi \ne \text{id}$ and $V$ acts freely, $\psi$ has no fixed points.  Let $\prec$ be the linear order on $V$ given by Theorem \ref{thm_FolkAppHolder}.  Then either $\psi \prec \text{id}$ or $\psi^{-1} \prec \text{id}$.  Without loss of generality\footnote{By replacing $\psi$ with $\psi^{-1}$ if necessary; note that if $F$ is $\psi$-invariant then it is also $\psi^{-1}$-invariant.} we can assume that $\psi \prec \text{id}$.  Then $\langle \psi^n(x) \ : \ n \in \mathbb{N} \rangle$ is a strictly decreasing sequence.  Since $\psi$ is continuous and has no fixed points, it follows that $\lim_{n \to \infty} \psi^n(x) = -\infty$.  So for some $n \in \mathbb{N}$, $\psi^n(x) <  t$.  Then
\begin{align*}
F(x) & = F\big( \psi^n(x) \big) & \text{ (because $F$ is $\psi$-invariant)} \\
& = F\big( \varphi(\psi^n(x)) \big) & \text{ (because $\psi^n(x) < t$ and $F$ is $\varphi$-invariant before $t$ )} \\
& = F\big( \psi^n(\varphi(x)) \big)  & \text{ (because $V$ is abelian, by Theorem \ref{thm_FolkAppHolder})}  \\
& = F(\varphi(x)) & \text{ (because $F$ is $\psi$-invariant).}
\end{align*}
\end{proof}

A \textbf{commutator} in a group is an element of the form $aba^{-1} b^{-1}$.   If $U$ is a group, $\boldsymbol{[U,U]}$ denotes the \textbf{commutator subgroup of $U$}, which is the subgroup generated by the commutators.  At one point we will use the basic group-theoretic fact that
\begin{equation*}
[U,U] \text{ is a normal subgroup of } U.
\end{equation*}

We now commence with the proof of Theorem \ref{thm_MainPos}.  Fix any nonempty set $S$ for the remainder of this section.   Assume $I$ is an open interval of real numbers and $U$ is a subgroup of $\text{Homeo}^+(I)$ such that
\begin{enumerate}
 \item $[U,U]$ acts freely on $I$, and
 \item each non-identity member of $U$ has at most one fixed point.
\end{enumerate}

Define the following subsets of ${}^I S$:
\begin{itemize} \item\label{item_GroupI} $\text{Inv}_{[U,U]}:=$ the set of functions in ${}^I S$ that are invariant with respect to some non-identity member of $[U,U]$.
 \item\label{item_GroupII} $\text{Inv}_{U}:=$ the set of functions in ${}^I S$ that are invariant with respect to some non-identity member of $U$.
\end{itemize}

By the Axiom of Choice, there exists a well ordering $\triangleleft$ of ${}^I S$ such that $\text{Inv}_{[U,U]}$ functions are listed first, then functions in $\text{Inv}_U \setminus \text{Inv}_{[U,U]}$, then the rest of ${}^I S$.\footnote{The Axiom of Choice directly provides wellorders $\triangleleft_0$, $\triangleleft_1$, and $\triangleleft_2$ for the (pairwise disjoint) sets $\text{Inv}_{[U,U]}$, $\text{Inv}_U \setminus \text{Inv}_{[U,U]}$, and ${}^I S \setminus \text{Inv}_U$, respectively.  Stacking these three wellorders yields the desired wellorder $\triangleleft$ of ${}^I S$.}  The reason for these requirements will become apparent in the proof of Claim \ref{clm_Extensions}.

Recall from the discussion of anonymity in Section \ref{sec_Intro} that if $f \in {}^{\underset{\smile}{I}} S$ and $\varphi \in \text{Homeo}^+(I)$, $f \circ \varphi$ is understood to have domain $\big(-\infty, \varphi^{-1}(t_f) \big)$.  Define $\boldsymbol{ f \circ U}$ to be the set of functions of the form $f \circ \varphi$, where $\varphi \in U$.  Let $\boldsymbol{\overline{f \circ U}}$ denote the collection of all (total) $F: I \to S$ such that $F$ extends at least one member of $f \circ U$. Note that a given $F \in \overline{f \circ U}$ may possibly extend more than one member of $f \circ U$;\footnote{For example (in the case dealt with by Bajpai-Velleman):  if $F$ happens to extend $f$ and $F$ is periodic with period $p$, then $F$ also extends $f \circ \varphi_k$ where $\varphi_k(x)=x+kp$ for any $k \in \mathbb{N}$.  }  in fact, dealing with this issue is the heart of the argument.  Let $F_{f \circ U}$ denote the $\triangleleft$-least member of the set $\overline{f \circ U}$.  

Part \eqref{item_StrongInv} of the following claim is the key use of Lemma \ref{lem_KeyLemmaHoelder}, which was a consequence of H\"older's Theorem.
\begin{claim}\label{clm_Extensions}
Suppose $f:(-\infty,t_f) \to S$, $\varphi \in U$, and $F_{f \circ U}$ extends $f \circ \varphi$.  Suppose $\gamma \in U$, $\gamma \ne \text{id}_I$, and $F_{f \circ U}$ is $\gamma$-invariant before $\varphi^{-1}(t_f)$ (i.e., for inputs that come from the domain of $f \circ \varphi$).  Then:
\begin{enumerate}
 \item\label{item_WeakInv} $F_{f \circ U} \in \text{Inv}_{U}$ (i.e., $F_{f \circ U}$ is fully invariant with respect to some non-identity member of $U$, though not necessarily with respect to $\gamma$).
 \item\label{item_StrongInv} If $\gamma \in [U,U]$, then $F_{f \circ U}$ is fully $\gamma$-invariant. 
\end{enumerate} 
\end{claim}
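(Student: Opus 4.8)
The plan is to manufacture, from the hypotheses, an auxiliary \emph{total} function $H\colon I\to S$ that extends $f\circ\varphi$ and is \emph{fully} $\gamma$-invariant, and then to play this off against the $\triangleleft$-minimality of $F_{f\circ U}$ inside $\overline{f\circ U}$, exploiting the way $\triangleleft$ was stratified into the three consecutive blocks $\text{Inv}_{[U,U]}$, $\text{Inv}_U\setminus\text{Inv}_{[U,U]}$, and ${}^I S\setminus\text{Inv}_U$.

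First I would observe that, since $F_{f\circ U}$ extends $f\circ\varphi$ and $\text{dom}(f\circ\varphi)=\bigl(-\infty,\varphi^{-1}(t_f)\bigr)$, we have $F_{f\circ U}\restriction\bigl(-\infty,\varphi^{-1}(t_f)\bigr)=f\circ\varphi$; by hypothesis $F_{f\circ U}$ is $\gamma$-invariant before $t:=\varphi^{-1}(t_f)$. Applying Lemma \ref{lem_EasyExtend} to the total function $F_{f\circ U}$, the homeomorphism $\gamma$, and the time $t$ yields a total $H\colon I\to S$ that extends $F_{f\circ U}\restriction(-\infty,t)=f\circ\varphi$ and satisfies $H=H\circ\gamma$. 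In particular $H\in\overline{f\circ U}$, so $F_{f\circ U}\trianglelefteq H$ by minimality of $F_{f\circ U}$ in $\overline{f\circ U}$.

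For part \eqref{item_WeakInv}: since $\gamma\in U$, $\gamma\ne\text{id}_I$, and $H=H\circ\gamma$, we have $H\in\text{Inv}_U$. Because $\triangleleft$ lists every element of $\text{Inv}_U$ strictly before every element of ${}^I S\setminus\text{Inv}_U$, the relation $F_{f\circ U}\trianglelefteq H\in\text{Inv}_U$ forces $F_{f\circ U}\in\text{Inv}_U$, which is exactly the claim. For part \eqref{item_StrongInv}, assume in addition $\gamma\in[U,U]$; then $H$ witnesses $H\in\text{Inv}_{[U,U]}$, and since $\triangleleft$ lists $\text{Inv}_{[U,U]}$ first of all, $F_{f\circ U}\trianglelefteq H$ forces $F_{f\circ U}\in\text{Inv}_{[U,U]}$ — that is, $F_{f\circ U}$ is invariant with respect to some non-identity member of $[U,U]$. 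Now I would invoke Lemma \ref{lem_KeyLemmaHoelder} with $V:=[U,U]$, which acts freely on $I$ by the standing assumption of Theorem \ref{thm_MainPos}: since $F_{f\circ U}$ is invariant under a non-identity element of $V$, since $\gamma\in V$, and since $F_{f\circ U}$ is past $\gamma$-invariant (being $\gamma$-invariant before $t$), we conclude that $F_{f\circ U}$ is fully $\gamma$-invariant.

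The only genuinely delicate point — and the very reason $\triangleleft$ was built by stacking three wellorders — is the inference ``$F_{f\circ U}\trianglelefteq H$ and $H\in\text{Inv}_{[U,U]}$ (resp.\ $\text{Inv}_U$), therefore $F_{f\circ U}$ lies in the same set.'' It is immediate from the block structure of $\triangleleft$, but it is the linchpin: without already knowing that $F_{f\circ U}$ \emph{itself} is invariant under some non-identity element of $[U,U]$, the free-action hypothesis of Lemma \ref{lem_KeyLemmaHoelder} (hence H\"older's Theorem) could not be brought to bear, and past $\gamma$-invariance would not be upgraded to invariance on all of $I$. Everything else is routine bookkeeping with the domains of the composed functions $f\circ\varphi$ and $f\circ\gamma$.
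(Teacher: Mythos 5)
Your proposal is correct and follows essentially the same route as the paper: apply Lemma \ref{lem_EasyExtend} to produce a fully $\gamma$-invariant $H \in \overline{f\circ U}$, use the $\triangleleft$-minimality of $F_{f\circ U}$ together with the block structure of $\triangleleft$ to place $F_{f\circ U}$ in $\text{Inv}_U$ (resp.\ $\text{Inv}_{[U,U]}$), and then invoke Lemma \ref{lem_KeyLemmaHoelder} with $V=[U,U]$ to upgrade past $\gamma$-invariance to full $\gamma$-invariance. No gaps.
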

\begin{proof}
By Lemma \ref{lem_EasyExtend}, there is an $H:\mathbb{R} \to S$ that extends $F_{f \circ U}|_{\big(-\infty, \varphi^{-1}(t_f) \big)}$ ($=f \circ \varphi$) and is $\gamma$-invariant.  Then $H \in \overline{f \circ U} \cap  \text{Inv}_U$.  Since $F_{f \circ U}$ is the $\triangleleft$-least member of $\overline{f \circ U}$, $F_{f \circ U} \trianglelefteq H$.  And since $H \in \text{Inv}_U$ and $\triangleleft$ lists members of $\text{Inv}_U$ before members of ${}^I S \setminus \text{Inv}_U$, it follows that $F_{f \circ U} \in \text{Inv}_U$.  This proves part \eqref{item_WeakInv}.

If $\gamma$ was an element of $[U,U]$, then $H \in \overline{f \circ U} \cap \text{Inv}_{[U,U]}$, and since $\triangleleft$ lists members of $\text{Inv}_{[U,U]}$ before members of $\text{Inv}_U \setminus \text{Inv}_{[U,U]}$, it follows that $F_{f \circ U} \in \text{Inv}_{[U,U]}$.  So $F_{f \circ U}$ is invariant with respect to some non-identity member of $[U,U]$.  By Lemma \ref{lem_KeyLemmaHoelder} and our assumption that $[U,U]$ acts freely on $I$, $F_{f \circ U}$ is fully $\gamma$-invariant.
\end{proof}

Define
\[
\mathcal{P}: {}^{\underset{\smile}{I}} S \to S
\]
by
\begin{equation}\label{eq_DefP}
\mathcal{P}(f):= F_{f \circ U}(\underbrace{\varphi^{-1}(t_f)}_{\substack{\text{Right end-} \\ \text{point of} \\ \text{dom}\big( f \circ \varphi \big) }}),
\end{equation}
where $\varphi$ is \emph{any} member of $U$ witnessing that $F_{f \circ U}$ is an element of $\overline{f \circ U}$; i.e., where $\varphi$ is  \emph{any} member of $U$ such that $F_{f \circ U}$ extends $f \circ \varphi$.  The next claim says that $\mathcal{P}$ is well-defined, in the sense that the expression above does not depend on the choice of the $\varphi$:

\begin{claim}\label{claim_WellDef}
The definition of $\mathcal{P}(f)$ in \eqref{eq_DefP} does not depend which $\varphi \in U$ we choose to witness $F_{f \circ U} \in \overline{f \circ U}$.  I.e., if $\varphi_1$ and $\varphi_2$ are both in $U$, and $F_{f \circ U}$ extends both $f \circ \varphi_1$ and $f \circ \varphi_2$, then 
\[
F_{f \circ U}\big( \varphi_1^{-1}(t_f) \big) = F_{f \circ U}\big( \varphi_2^{-1}(t_f) \big).
\]
\end{claim}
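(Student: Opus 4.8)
The plan is to unwind what it means for $F := F_{f\circ U}$ to extend both $f\circ\varphi_1$ and $f\circ\varphi_2$, and to show that the ``time distortion'' $\gamma := \varphi_2^{-1}\circ\varphi_1 \in U$ relating the two evaluation points behaves well. First I would set $\gamma := \varphi_2^{-1}\circ\varphi_1$, so that $\gamma \in U$, and note that $\varphi_1^{-1}(t_f) = \gamma^{-1}\big(\varphi_2^{-1}(t_f)\big)$; thus the two quantities we must equate are $F\big(\gamma^{-1}(s)\big)$ and $F(s)$, where $s := \varphi_2^{-1}(t_f)$ is the right endpoint of $\mathrm{dom}(f\circ\varphi_2)$. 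So it suffices to show $F$ is $\gamma$-invariant before $s$, and then to promote this to full $\gamma$-invariance (at least at the single point $\gamma^{-1}(s)$), at which point Claim \ref{clm_Extensions} will do the work.

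Next I would verify that $F$ is $\gamma$-invariant before $s$. For $z < s$, I want $F(\gamma(z)) = F(z)$. Since $F$ extends $f\circ\varphi_2$, for $z < s = \varphi_2^{-1}(t_f)$ we have $F(z) = (f\circ\varphi_2)(z) = f(\varphi_2(z))$. On the other hand, $F$ also extends $f\circ\varphi_1$, whose domain has right endpoint $\varphi_1^{-1}(t_f) = \gamma^{-1}(s)$; so I would need to check $\gamma(z)$ lies in that domain, i.e.\ $\gamma(z) < \gamma^{-1}(s)$ — here I should be careful. Actually the cleaner route: for $z < s$, we have $\gamma(z) = \varphi_2^{-1}(\varphi_1(z))$, and I want $F(\gamma(z)) = f(\varphi_1(z))$, which holds provided $\gamma(z) < \varphi_2^{-1}(t_f) = s$, i.e.\ provided $\varphi_1(z) < t_f$, i.e.\ provided $z < \varphi_1^{-1}(t_f) = \gamma^{-1}(s)$. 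So the argument splits according to whether $\gamma^{-1}(s) \le s$ or $\gamma^{-1}(s) > s$; in one case $z < \gamma^{-1}(s)$ is automatic from $z < s$ restricted appropriately, in the other I would restrict attention to $t := \min\{s, \gamma^{-1}(s)\}$ and show $F$ is $\gamma$-invariant before $t$ — and this $t$ is still $\le \varphi_1^{-1}(t_f)$, so it is a legitimate ``before'' point for applying the claim with $\varphi_1$. The upshot is: $F$ is $\gamma$-invariant before $\varphi_1^{-1}(t_f)$ (or before $\varphi_2^{-1}(t_f)$, after swapping the roles of $\varphi_1,\varphi_2$).

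Then I would finish using Claim \ref{clm_Extensions}. If $\gamma = \mathrm{id}_I$, the two expressions are literally equal and there is nothing to prove. Otherwise $\gamma$ is a non-identity member of $U$, and since $F = F_{f\circ U}$ extends $f\circ\varphi_1$ and is $\gamma$-invariant before $\varphi_1^{-1}(t_f)$, part \eqref{item_WeakInv} of Claim \ref{clm_Extensions} gives that $F \in \mathrm{Inv}_U$, i.e.\ $F$ is invariant with respect to some non-identity member $\psi$ of $U$. Now I invoke Lemma \ref{lem_KeyLemmaHoelder}: but wait — that lemma requires the ambient group $V$ to act freely, and $U$ need not. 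The correct move is to observe that the hypothesis ``each non-identity member of $U$ has at most one fixed point'' still lets me run the $\prec_{x_0}$ argument: since $F$ is $\psi$-invariant for a non-identity $\psi\in U$, and $F$ is past $\gamma$-invariant, I want full $\gamma$-invariance. Here the main obstacle is exactly that $\psi$ (and $\gamma$) may each have a fixed point and need not commute, so Lemma \ref{lem_KeyLemmaHoelder} does not directly apply. I expect the resolution to use the \emph{commutator} subgroup: $[U,U]$ acts freely, $U/[U,U]$ is abelian, so the commutator $[\psi,\gamma] \in [U,U]$ controls the discrepancy, and one shows $F$ — being past $\gamma$-invariant and $\psi$-invariant — must in fact be $[U,U]$-related in a way that forces $F \in \mathrm{Inv}_{[U,U]}$; then part \eqref{item_StrongInv} of the claim applies to the commutator and, combined with the $\psi$-invariance, yields $F(\gamma(x)) = F(x)$ for all $x$ by the same four-line computation as in Lemma \ref{lem_KeyLemmaHoelder} (with the abelian identity $\psi^n\gamma = \gamma\psi^n$ replaced by tracking the commutator $\psi^n\gamma\psi^{-n}\gamma^{-1} \in [U,U]$, which $F$ respects). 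In particular $F(\gamma^{-1}(s)) = F(s)$, which is the claimed equality. The delicate point to get right is that full $\gamma$-invariance, not merely invariance at one point, is what Claim \ref{clm_Extensions} delivers, so I must make sure the hypotheses of that claim ($F_{f\circ U}$ extends $f\circ\varphi$ for the relevant $\varphi$, and is $\gamma$-invariant before $\varphi^{-1}(t_f)$) are met for one of $\varphi_1,\varphi_2$ — which the second paragraph arranges.
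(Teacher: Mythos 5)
Your setup is on track: introducing $\tau:=\varphi_2^{-1}\varphi_1$, verifying that $F$ is past-invariant with respect to $\tau$ and $\tau^{-1}$ before $x_1:=\varphi_1^{-1}(t_f)$ and $x_2:=\varphi_2^{-1}(t_f)$ respectively, and then using part \eqref{item_WeakInv} of Claim \ref{clm_Extensions} to extract a non-identity $\psi\in U$ with $F$ fully $\psi$-invariant — this is exactly how the paper's proof begins. But the finish is where the real work lives, and your proposal leaves it as ``I expect the resolution to use the commutator subgroup \dots one shows $F$ must in fact be $\mathrm{Inv}_{[U,U]}$-related in a way that forces\dots''. That expectation fails in an important case: if $\psi$ \emph{commutes} with $\tau$, the commutator $[\psi,\tau]$ is the identity, lies trivially in $[U,U]$, and gives you no leverage at all — you cannot conclude $F\in\mathrm{Inv}_{[U,U]}$ from it. The paper splits into two cases precisely here. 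In the commuting case, the ``at most one fixed point'' hypothesis is used in a way your sketch never identifies: after normalizing so that $\psi(x_1)\le x_1$, if $x_1$ were a fixed point of $\psi$ then commutation would force $\psi(x_2)=\psi(\tau^{-1}(x_1))=\tau^{-1}(\psi(x_1))=x_2$ as well, giving $\psi$ two fixed points; hence $\psi(x_1)<x_1$ strictly, and a short direct computation using only $\psi$-invariance and the past-invariances yields $F(x_1)=F(x_2)$ with no commutators involved.

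In the non-commuting case your target is also miscalibrated: you aim to prove \emph{full} $\gamma$-invariance of $F$ (for $\gamma=\tau$ or $\tau^{-1}$), but that is stronger than what the argument delivers or needs. What the paper actually does is show that $\beta:=\psi^{-1}\tau\psi\tau^{-1}$ is a non-identity element of $[U,U]$, prove by computation (using $\psi(x_1)\le x_1$) that $F$ is $\beta$-invariant \emph{before} $x_2$, and then apply part \eqref{item_StrongInv} of Claim \ref{clm_Extensions} to upgrade to full $\beta$-invariance. It then picks $\alpha\in\{\beta,\beta^{-1}\}$ with $\alpha(x_2)<x_2$, shows $F$ is $\tau^{-1}\alpha\tau$-invariant before $x_1$, uses normality of $[U,U]$ to see $\tau^{-1}\alpha\tau\in[U,U]$ so that part \eqref{item_StrongInv} applies again, and only then computes $F(x_1)=F\tau^{-1}\alpha\tau(x_1)=F\alpha\tau(x_1)=F\tau(x_1)=F(x_2)$. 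The single equality $F(x_1)=F(x_2)$ is extracted from full invariance under two specific elements of $[U,U]$ together with the past-invariances; full $\tau$-invariance of $F$ is never established. So the proposal correctly assembles the ingredients but is missing the case analysis and the actual chain of computations that constitute the proof.
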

\begin{proof}
We follow the proof in \cite[Theorem 5]{MR3552748} very closely (which was for the special case where $I=\mathbb{R}$ and $U$ was the group of increasing affine functions).  

For the remainder of the proof of the claim, we abbreviate $F_{f \circ U}$ by $F$, and omit the ``$\circ$" symbol in compositions.  Let $x_1=\varphi^{-1}_1(t_f)$ and $x_2=\varphi^{-1}_2(t_f)$; so the domain of $f \varphi_1$ is $(-\infty,x_1)$, the domain of $f  \varphi_2$ is $(-\infty,x_2)$, and $F$ extends both of them.  We must show that $F(x_1)=F(x_2)$.  Clearly this holds if $x_1=x_2$, so suppose from now on that $x_1 \ne x_2$.  In particular, 
\begin{equation}
\varphi^{-1}_1  \varphi_2 \ \text{ is a non-identity member of } U
\end{equation}
because it moves $x_2$ to $x_1$.  Since $F$ extends $f \varphi_1$ before $x_1$ and extends $f \varphi_2$ before $x_2$, it follows that:\footnote{To see $\varphi^{-1}_1 \varphi_2$-invariance before $x_2$, suppose $z < x_2 = \varphi_2^{-1}(t_f)$. Since $\varphi_2(z)  < \varphi_2(x_2) = t_f$, applying the order-preserving $\varphi_1^{-1}$ yields that $\varphi_1^{-1} \varphi_2(z) < \varphi_1^{-1}(t_f) = x_1$.  Since $F$ extends $f \circ \varphi_1$ before $x_1$, $F \varphi_1^{-1} \varphi_2(z) = f \varphi_1 \varphi_1^{-1} \varphi_2(z)$, but the latter is just $f \varphi_2(z)$, which (since $z < x_2$ and $F$ extends $f \varphi_2$ before $x_2$) is the same as $F(z)$.  The proof of $\varphi^{-1}_2 \circ \varphi_1$-invariance of $F$ before $x_1$ is similar.}
\begin{equation}\label{eq_Past_F_Inv}
F \text{ is } \varphi^{-1}_1  \varphi_2 \text{-invariant before } x_2\text{, and } \varphi^{-1}_2  \varphi_1 \text{-invariant before } x_1.
\end{equation}

Since $F=F_{f \circ U}$ extends $f \circ \varphi_2$, $F$ is $\varphi_1^{-1} \varphi_2$-invariant before $\varphi_2^{-1}(t_f)=x_2$, and both $\varphi_2$ and $\varphi_1^{-1} \varphi_2$ are in $U$, part \ref{item_WeakInv} of Claim \ref{clm_Extensions} implies
\begin{equation}
F \in \text{Inv}_U.
\end{equation}
So there is some $\psi \in U$ such that $\text{id}_I \ne \psi$ and $F = F \psi$.  Without loss of generality---since both $\psi$ and $\psi^{-1}$ are both order preserving and $F$ is invariant with respect to both---we can assume that 
\begin{equation}\label{eq_Psix_1}
\psi(x_1) \le x_1.
\end{equation}

Still following the proof of \cite[Theorem 5]{MR3552748}, we consider two cases.  The assumption that non-identity members of $U$ have at most one fixed point is used in Case 1, and the assumption that $[U,U]$ acts freely on $I$ is used in Case 2.

\textbf{Case 1: $\psi$ commutes with $\varphi_2^{-1} \varphi_1$.}  We first claim that $\psi(x_1) < x_1$.  If not, then by \eqref{eq_Psix_1}, $x_1$ is a fixed point of $\psi$.  Then

\[
\psi(\underbrace{x_2}_{\varphi_2^{-1} \varphi_1(x_1)}) = \psi \varphi_2^{-1} \varphi_1 (x_1) \overset{\text{\tiny by case}}{=} \varphi_2^{-1} \varphi_1 \underbrace{\psi(x_1)}_{x_1} = x_2,
\] 
so $x_2$ is another fixed point of $\psi$.  This contradicts that non-identity members of $U$ have at most one fixed point.  

So $\psi(x_1) < x_1$.  Then

\begin{align*}
F (x_1) &  = F \psi (x_1) & \text{ (because $F$ is $\psi$-invariant)} \\
& = F \varphi_2^{-1} \varphi_1 \psi(x_1) & \text{ (because $\psi(x_1) < x_1$ and \eqref{eq_Past_F_Inv} )} \\
& = F \psi\varphi_2^{-1} \varphi_1(x_1) & \text{ ($\psi$ commutes with $\varphi_2^{-1} \varphi_1$ by the Case)} \\
& = F \psi(x_2)  & \\
& = F(x_2). & \text{ (because $F$ is $\psi$-invariant)} 
\end{align*}

\textbf{Case 2: $\psi$ does not commute with $\varphi_2^{-1} \varphi_1$}.  Let $\tau:= \varphi_2^{-1} \varphi_1$, and consider the commutator $\psi^{-1} \tau  \psi \tau^{-1}$.  By our case, 
\begin{equation}\label{eq_BetaNonIdCommutator}
\beta:=\psi^{-1} \tau  \psi \tau^{-1} \text{ is a non-identity member of } [U,U].
\end{equation}

First we show that
\begin{equation}\label{eq_betaInvBefore}
F \text{ is } \beta \text{-invariant before } x_2.
\end{equation}
  To see this, first observe that $\tau(x_1) = x_2$.  Suppose $z < x_2$.  Then $\tau^{-1} (z) < x_1$, and by \eqref{eq_Psix_1}, $\psi \tau^{-1}(z) < x_1$, and hence (since $\tau(x_1) = x_2$) $\tau \psi \tau^{-1}(z) < x_2$.  Then
\begin{align*}
F \beta (z) & = F \psi^{-1} \tau \psi \tau^{-1}(z) \\
& = F \tau \psi \tau^{-1}(z) & \text{($F$ is $\psi$-invariant)} \\
 & = F \underbrace{\varphi_1^{-1} \varphi_2}_{\tau^{-1}} \tau \psi \tau^{-1}(z) & \text{(by \eqref{eq_Past_F_Inv} and since $\tau \psi \tau^{-1}(z) < x_2$)} \\
  & = F \psi \tau^{-1}(z) & \\
  & = F \tau^{-1}(z) & \text{($F$ is $\psi$-invariant)} \\
  & = F \varphi_1^{-1} \varphi_2 (z) &   \\
 &  = F(z) &  \text{(by \eqref{eq_Past_F_Inv} and $z < x_2$)},
\end{align*}
which concludes the proof of \eqref{eq_betaInvBefore}.  By \eqref{eq_BetaNonIdCommutator}, \eqref{eq_betaInvBefore}, and part \ref{item_StrongInv} of Claim \ref{clm_Extensions}, 
\begin{equation}
F \text{ is (fully) } \beta \text{-invariant.}
\end{equation}

Since $\text{id} \ne \beta \in [U,U]$ and $[U,U]$ acts freely on $I$, in particular $x_2$ is not a fixed point of $\beta$.  Let $\alpha$ denote whichever one of $\beta$, $\beta^{-1}$ sends $x_2$ to an output strictly less than $x_2$.  Then:
\begin{equation}\label{eq_PropertiesAlpha}
\alpha \in [U,U], \ \alpha(x_2) <  x_2, \text{ and } F \text{ is } \alpha \text{-invariant.}
\end{equation}

Next we claim that
\begin{equation}\label{eq_Tauinalphatau_inv}
F \text{ is } \tau^{-1} \alpha \tau \text{-invariant before } x_1.
\end{equation}
Suppose $z < x_1$.  Then
\begin{align*}
F \tau^{-1} \alpha \tau(z) & = F \varphi_1^{-1} \varphi_2 \underbrace{\alpha \overbrace{\varphi_2^{-1} \varphi_1 (z)}^{<x_2}}_{\substack{<x_2 \text{ because } \\ \alpha(x_2) \le x_2 }} & \\ 
& = F \alpha \varphi_2^{-1} \varphi_1(z) &  \text{ by \eqref{eq_Past_F_Inv}}  \\
 & =  F \varphi_2^{-1} \varphi_1(z) & \text{($F$ is $\alpha$-invariant)} \\
 & = F(z) & \text{by \eqref{eq_Past_F_Inv} and $z < x_1$},
\end{align*}
which concludes the proof of \eqref{eq_Tauinalphatau_inv}.

Recall that commutator subgroups are always normal; hence, $\tau^{-1} \alpha \tau$ is an element of $[U,U]$.  Then by \eqref{eq_Tauinalphatau_inv}, the fact that $F$ extends $f \varphi_1$, and since $\varphi_1 \in U$ and $\tau^{-1} \alpha \tau \in [U,U]$, part \ref{item_StrongInv} of Claim \ref{clm_Extensions} ensures that
\begin{equation}\label{eq_FullyConjugateInv}
F \text{ is (fully) } \tau^{-1} \alpha \tau \text{-invariant.}
\end{equation}

Finally,
\begin{align*}
F(x_1) & =F \tau^{-1} \underbrace{\alpha \overbrace{\tau (x_1)}^{x_2}}_{<x_2 \text{ by } \eqref{eq_PropertiesAlpha}} & \text{by \eqref{eq_FullyConjugateInv}} \\
& = F \alpha \tau (x_1)& \text{ by \eqref{eq_Past_F_Inv} } \\
& = F \tau(x_1)  & \text{ $F$ is $\alpha$-invariant} \\
& = F(x_2). &
\end{align*}
\end{proof}

With Claim \ref{claim_WellDef} in hand, we can now finish the proof of Theorem \ref{thm_MainPos}.  To see the $U$-anonymity of $\mathcal{P}$, suppose  $f,g \in {}^{\underset{\smile}{I}} S$, $\tau \in U$, $\tau(t_g) = t_f$, and $g = f \circ \tau$.  Then $g \in f \circ U$, and it follows that $f \circ U = g \circ U$ and hence
\begin{equation}
F_{f \circ U} = F_{g \circ U}.
\end{equation}
Let $F$ denote this common function.  Let $\varphi \in U$ witness that $F \in \overline{g \circ U}$; i.e., $F$ extends $g \circ \varphi= f \circ \tau \circ \varphi$.  So $F = F_{f \circ U}$ extends $f \circ (\tau \circ \varphi)$, and since $\tau \circ \varphi \in U$, Claim \ref{claim_WellDef} ensures that $\mathcal{P}(f) = F\Big( (\tau \circ \varphi)^{-1}(t_f) \Big)$.  Then
\begin{align}
\mathcal{P}(g)  = F \big( \varphi^{-1}(\underbrace{t_g}_{\tau^{-1}(t_f)}) \big) = F\Big( (\tau \circ \varphi)^{-1} (t_f) \Big)= \mathcal{P}(f).
\end{align}

To see that $\mathcal{P}$ is good (this argument first appeared in Hardin-Taylor~\cite{MR2384262}):  suppose $H: I \to S$.  Let
\[
B:= \left\{ t \in \mathbb{R} \ : \ \mathcal{P}\left( H|_{(-\infty,t)} \right) \ne H(t) \right\}
\]
By Fact \ref{fact_WO_MeasureZero} (from page \pageref{fact_WO_MeasureZero}), to see that $B$ has measure zero---in fact, is countable and nowhere dense---it suffices to show that $B$ is a wellordered subset of $\mathbb{R}$ under the usual ordering $<$ on $\mathbb{R}$; and for that it suffices find an order-preserving embedding from $(B,<)$ into $\left( {}^I S, \triangleleft \right)$.\footnote{Since any descending chain from $(B,<)$ would get carried by the order-preserving embedding to a descending chain in $\left( {}^I S, \triangleleft \right)$, and hence would have to terminate at some finite stage because $\triangleleft$ is a wellorder.}  Define 
\[
e: (B,<) \to \left( {}^I S, \triangleleft \right), \ \ t \mapsto F_{ H|_{(-\infty,t)} \circ U}.
\]
To see that $e$ is order-preserving, suppose $s < t$ are both in $B$; we will show that $e(s) \triangleleft e(t)$ by showing $e(s) \trianglelefteq e(t)$ and $e(s) \ne e(t)$.  We first prove the non-strict inequality, i.e., that that
  \[
 F_{ H|_{(-\infty,s)} \circ U} \trianglelefteq F_{ H|_{(-\infty,t)} \circ U}.
 \]
 Since $F_{ H|_{(-\infty,s)} \circ U}$ is, by definition, the $\triangleleft$-least member of $\overline{H|_{(-\infty,s)} \circ U}
$, it suffices to show that 
 \begin{equation}\label{eq_F_H_in}
 F_{ H|_{(-\infty,t)} \circ U} \ \in \ \overline{H|_{(-\infty,s)} \circ U}.
 \end{equation}
Now $F_{ H|_{(-\infty,t)} \circ U}$, being a member of $\overline{H|_{(-\infty,t)} \circ U}$, extends $H|_{(-\infty,t)} \circ \varphi$ for some $\varphi \in U$.   But since $s < t$ (and $\varphi$ is order preserving), it also extends $H|_{(-\infty,s)} \circ \varphi$.  This verifies \eqref{eq_F_H_in}.
 
To prove that 
\[
 F_{ H|_{(-\infty,s)} \circ U} \ne F_{ H|_{(-\infty,t)} \circ U},
\]
suppose toward a contradiction that they are equal; let $F$ denote the common function.  Let $\varphi \in U$ witness that $F \in \overline{H|_{(-\infty,t)} \circ U}$; so $F$ extends $H|_{(-\infty,t)} \circ \varphi$.  Since $s < t$ and $\varphi$ is order preserving, $F$ also extends $H|_{(-\infty,s)} \circ \varphi$.  Then by Claim \ref{claim_WellDef} and the fact that $F = F_{H|_{(-\infty,s)} \circ U}$, 
\begin{equation}\label{eq_MainEqPF}
\mathcal{P}\Big(H|_{(-\infty,s)} \Big) = F\Big( \varphi^{-1}(s) \Big) 
\end{equation}
Now the left side of \eqref{eq_MainEqPF} is \textbf{not} equal to $H(s)$, because $s \in B$.  So 
\begin{equation}\label{eq_NE_to_contradict}
F\Big( \varphi^{-1}(s) \Big) \ne H(s).
\end{equation}
  On the other hand, $F$ extends $H|_{(-\infty,t)}  \circ \varphi$, whose domain is $(-\infty, \varphi^{-1}(t))$; and since $s < t$ and $\varphi$ (and hence $\varphi^{-1}$) is order preserving, $\varphi^{-1}(s) < \varphi^{-1}(t)$.  So $F$ and $H|_{(-\infty,t)}  \circ \varphi$ agree on the input $\varphi^{-1}(s)$.  Then $F\big(\varphi^{-1}(s)\big) = H|_{(-\infty,t)} \circ \varphi \big( \varphi^{-1}(s) \big) = H\big( \varphi(\varphi^{-1}(s)) \big) = H(s)$, contradicting \eqref{eq_NE_to_contradict}.  
  
This concludes the proof of Theorem \ref{thm_MainPos}.


\section{Proof of Corollary \ref{cor_Pos}}\label{sec_Cor}

In this section we prove parts \ref{item_FiniteFP} and \ref{item_Cyclic} of Corollary \ref{cor_Pos}.  The remaining parts should be clear.  The following lemma proves part \ref{item_FiniteFP}.

\begin{lemma}\label{lem_ActTransFP}
Suppose $U$ is a subgroup of $\text{Homeo}^+(I)$, $U$ acts transitively on $I$, $[U,U]$ acts freely on $I$, and each non-identity member of $U$ has only finitely many fixed points.  Then in fact each non-identity member of $U$ has at most one fixed point.
\end{lemma}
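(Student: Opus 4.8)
The plan is to argue by contradiction. Suppose some non-identity $\varphi \in U$ has at least two distinct fixed points. By hypothesis $\varphi$ has only finitely many fixed points, so I can write $\text{Fix}(\varphi) = \{a_1 < a_2 < \dots < a_k\}$ with $k \ge 2$. The strategy is to use transitivity to manufacture a conjugate of $\varphi$ that still shares a fixed point with $\varphi$ but has a different fixed-point set, and then to extract from this a non-identity element of $[U,U]$ possessing a fixed point, contradicting the assumption that $[U,U]$ acts freely.

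First I would invoke transitivity of $U$ on $I$ to pick $g \in U$ with $g(a_1) = a_2$, and set $\psi := g\varphi g^{-1} \in U$. A one-line computation gives $\text{Fix}(\psi) = g(\text{Fix}(\varphi))$, a set of exactly $k$ points, one of which is $g(a_1) = a_2$. Thus $\varphi$ and $\psi$ both fix $a_2$, so the commutator $\beta := \varphi\psi\varphi^{-1}\psi^{-1}$, which lies in $[U,U]$, also fixes $a_2$. Since $[U,U]$ acts freely on $I$, this forces $\beta = \text{id}_I$; that is, $\varphi$ and $\psi$ commute.

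Next I would exploit this commutativity. Because $\varphi\psi = \psi\varphi$, the homeomorphism $\psi$ carries $\text{Fix}(\varphi)$ into itself (if $\varphi(x) = x$ then $\varphi(\psi(x)) = \psi(\varphi(x)) = \psi(x)$), hence restricts to a bijection of the finite linearly ordered set $\text{Fix}(\varphi)$; being strictly increasing, that restriction must be the identity, so $\psi$ fixes every $a_i$. Therefore $\{a_1,\dots,a_k\} \subseteq \text{Fix}(\psi) = g(\{a_1,\dots,a_k\})$, and since both sides have exactly $k$ elements they coincide; as $g$ is strictly increasing it must then fix each $a_i$, contradicting $g(a_1) = a_2 \ne a_1$.

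The only mildly delicate points are the identity $\text{Fix}(g\varphi g^{-1}) = g(\text{Fix}(\varphi))$ and the observation that a strictly increasing self-bijection of a finite linearly ordered set is the identity, but neither is a real obstacle. The genuinely load-bearing step is the realization that conjugating by the transitivity-supplied $g$ produces an element sharing a fixed point with $\varphi$ — this is precisely what allows the freeness of $[U,U]$ to be brought to bear.
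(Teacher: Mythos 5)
Your proof is correct, and it rests on the same key insight as the paper's: transitivity supplies an element carrying one fixed point of $\varphi$ to another, this produces a commutator in $[U,U]$ that has a fixed point, and freeness of the commutator subgroup then forces a commutation relation that clashes with the finiteness of $\mathrm{Fix}(\varphi)$. The two arguments diverge only in their tactics. The paper works with the commutator $\tau\varphi\tau^{-1}\varphi^{-1}$ directly (it fixes $x_2$ because $\varphi$ fixes both $x_1$ and $x_2=\tau(x_1)$), concludes that $\tau$ and $\varphi$ commute, and then iterates $\tau$ to manufacture a strictly increasing sequence $x_{n+1}=\tau(x_n)$ of fixed points of $\varphi$, contradicting finiteness head-on. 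You instead commute $\varphi$ with its conjugate $\psi=g\varphi g^{-1}$ --- a weaker relation than ``$g$ commutes with $\varphi$'' --- and close the argument combinatorially, applying twice the observation that a strictly increasing bijection of a finite chain is the identity (once to $\psi$ acting on $\mathrm{Fix}(\varphi)$, once to $g$). Both endgames use the finiteness hypothesis in an essential way; yours trades the paper's induction for a cardinality count plus the finite-permutation fact, and requires the extra (easy) identity $\mathrm{Fix}(g\varphi g^{-1})=g(\mathrm{Fix}(\varphi))$. Neither version is more general than the other; the choice is a matter of taste.
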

\begin{proof}
Suppose toward a contradiction that there is some non-identity $\varphi \in I$ and some $x_1 < x_2$ in $I$ such that both $x_1$ and $x_2$ are fixed points of $\varphi$.  Since $U$ acts transitively on $I$, there is some $\tau \in U$ such that $\tau(x_1) = x_2$.  Then $x_2$ is a fixed point of $\tau \varphi \tau^{-1} \varphi^{-1}$.  We claim that $\tau \varphi \tau^{-1} \varphi^{-1} \ne \text{id}_I$, which will contradict the assumption that $[U,U]$ acts freely on $I$.

Suppose toward a contradiction that $\tau \varphi \tau^{-1} \varphi^{-1} = \text{id}_I$; i.e., that 
\begin{equation}\label{eq_ConjugItslf}
\tau \varphi \tau^{-1}=\varphi.
\end{equation}
  For $n \ge 3$ set $x_n:= \tau(x_{n-1})$.  Since $\tau$ is increasing and $\tau(x_1) = x_2$, the sequence $(x_n)_n$ is a strictly increasing sequence in $I$.  We prove by induction that each $x_n$ is a fixed point of $\varphi$, which will contradict that $\varphi$ has only finitely many fixed points.  That $x_1$ and $x_2$ are fixed points is by assumption.  Now if $x_n$ is a fixed point of $\varphi$, then
\begin{align*}
\varphi(x_{n+1}) & = \tau \varphi \tau^{-1} (x_{n+1}) & \text{by \eqref{eq_ConjugItslf}} \\ 
&= \tau \varphi \tau^{-1} \big( \tau(x_n) \big) & \text{definition of $x_{n+1}$ } \\
& = \tau \varphi(x_n) & \\
& = \tau(x_n) & \text{Induction Hypothesis} \\  
& = x_{n+1}. &
\end{align*}

\end{proof}

The ``at most one" in the conclusion of Lemma \ref{lem_ActTransFP} is best possible, since when $I=\mathbb{R}$ and $U$ is the group of increasing affine functions, the commutator subgroup $[U,U]$ is exactly the group of shift functions (i.e., affine functions of slope 1).  And in that case, $U$ acts transitively on $\mathbb{R}$, $[U,U]$ acts freely on $\mathbb{R}$, and members of $U \setminus [U,U]$ have exactly one fixed point.

Next we prove part \ref{item_Cyclic} of Corollary \ref{cor_Pos}.   Suppose $\varphi \in \text{Homeo}^+(I)$, and that the set $C$ of fixed points of $\varphi$ has Lebesgue measure zero.  Since $\varphi$ is continuous, $C$ is closed.  For $t \in I \setminus C$, let $a(t)$ denote the largest member of $C \cap (-\infty,t)$ if that intersection is nonempty; otherwise set $a(t)=-\infty$.  Closure of $C$, together with the assumption that $t \notin C$, ensures $a(t)$ is well-defined.  Similarly let $b(t)$ be the smallest element of $C \cap (t,\infty)$ if that intersection is nonempty, and $b(t)=+\infty$ otherwise.  Finally, if $t \in I \setminus C$, set $J(t):= \big( a(t), b(t) \big)$.

Let 
\[
\mathcal{J}:= \big\{ J(t) \ : \ t \in I \setminus C \big\},
\]
and notice $\mathcal{J}$ is a pairwise disjoint collection of open intervals; hence,
\begin{equation}\label{eq_J_ctble}
\mathcal{J} \text{ is countable.}
\end{equation}
Also, for each $J \in \mathcal{J}$, $\varphi|_{J}$ is an element of $\text{Homeo}^+(J)$, and moreover is fixed-point-free, since $J=(a(t),b(t))$ for some $t \in I \setminus C$, and $C \cap (a(t),b(t))$ is empty.  The following Claim is a basic application of the Intermediate Value Theorem:
\begin{claim}\label{clm_IVT}
Each non-identity member of $\langle \varphi \rangle$ has the same fixed points as $\varphi$.
\end{claim}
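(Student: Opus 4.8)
The plan is to prove that for every nonzero integer $n$ the fixed-point set of $\varphi^n$ equals $C$, the fixed-point set of $\varphi$; since every non-identity element of $\langle\varphi\rangle$ is $\varphi^n$ for some $n\ne 0$, this is exactly the claim. One inclusion is immediate: if $\varphi(x)=x$ then $\varphi^{-1}(x)=x$ as well, and a one-line induction on $|n|$ gives $\varphi^n(x)=x$, so $C$ is contained in the fixed-point set of $\varphi^n$.

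For the reverse inclusion I would argue by contraposition. Fix $x\in I\setminus C$, so that $\varphi(x)\ne x$; the goal is $\varphi^n(x)\ne x$ for all $n\ne 0$. The point $x$ lies in the interval $J(x)=(a(x),b(x))\in\mathcal J$, and, as noted just above the claim, $\varphi\restriction J(x)$ is a fixed-point-free element of $\text{Homeo}^+(J(x))$; in particular $\varphi$ carries $J(x)$ onto itself. Thus the continuous function $y\mapsto\varphi(y)-y$ is nowhere zero on the interval $J(x)$, so by the Intermediate Value Theorem it has constant sign there. Replacing $\varphi$ by $\varphi^{-1}$ if necessary --- this has the same fixed points (hence the same intervals $J(t)$) and merely replaces $\varphi^n$ by $\varphi^{-n}$ as $n$ runs over the nonzero integers --- I may assume $\varphi(y)>y$ for every $y\in J(x)$. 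A routine induction then shows $\varphi^m(y)>y$ for all $y\in J(x)$ and all $m\ge 1$ (using that $\varphi$ is increasing and stabilizes $J(x)$), and substituting $y=\varphi^{-1}(z)$ into $\varphi(y)>y$ gives $\varphi^{-1}(z)<z$ for all $z\in J(x)$, whence $\varphi^{-m}(y)<y$ for $m\ge 1$ as well. Consequently the bi-infinite sequence $\langle\varphi^k(x):k\in\mathbb Z\rangle$ is strictly increasing, so $\varphi^n(x)\ne\varphi^0(x)=x$ for every $n\ne 0$, as desired.

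I do not expect a genuine obstacle here; the one point that needs care is that the sign of $\varphi(y)-y$ must be pinned down uniformly on the whole component $J(x)$ rather than only at the single point $x$, so that every forward and backward iterate of $x$ remains in a region free of fixed points of $\varphi$ --- and supplying such a region is exactly what the collection $\mathcal J$ of intervals was constructed for.
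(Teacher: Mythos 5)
Your proposal is correct and follows essentially the same route as the paper: both directions are handled identically, with the nontrivial inclusion obtained by applying the Intermediate Value Theorem to $\varphi - \mathrm{id}$ on the component interval $J(t)$ of $I \setminus C$ to pin down a uniform sign, and then iterating using monotonicity to see that the orbit of $t$ is strictly monotone. The only cosmetic difference is your explicit reduction to the case $\varphi(y) > y$ via replacing $\varphi$ by $\varphi^{-1}$, where the paper simply treats both sign cases symmetrically.
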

\begin{proof}
Suppose $\tau$ is a non-identity member of $\langle \varphi \rangle$; then $\tau = \varphi^n$ for some nonzero $n \in \mathbb{Z}$.  Clearly every fixed point of $\varphi$ is also a fixed point of $\tau$.  Now suppose $t \in I$ is \emph{not} a fixed point of $\varphi$. Consider the interval $J:=(a(t), b(t))$ and the restriction $\varphi|_J$.  Since $\varphi|_J$ has no fixed points in $J$, the Intermediate Value Theorem implies that either $\varphi(x) < x$ for all $x \in J$, or $\varphi(x) > x$ for all $x \in J$.  In the former case, since $\varphi$ is order preserving it follows by induction that $\varphi^n(x) < x$ and $\varphi^{-n}(x) > x$ for all $n \in \mathbb{N}$ and all $x \in J$ (and similarly for the latter case, with inequalities reversed).  In particular, $t$ is not a fixed point of $\tau$. 
\end{proof}

Suppose $J \in \mathcal{J}$.  Since $\varphi|_J$ is fixed-point-free, Claim \ref{clm_IVT} ensures that the cyclic subgroup $\langle \varphi|_J \rangle$ generated by $\varphi|_J$ in $\text{Homeo}^+(J)$ acts freely on $J$.  So by Theorem \ref{thm_MainPos}, there exists a good, $\langle \varphi|_J \rangle$-anonymous predictor $\mathcal{P}_J$ for functions in ${}^J S$.  Define
\[
\mathcal{Q}: {}^{\underset{\smile}{I}} S \to S
\]
as follows:  fix any $s_0 \in S$.  Given $f: (-\infty, t_f) \to S$, consider cases:
\begin{itemize}
 \item If $t_f \in C$, let $\mathcal{Q}(f):= s_0$.
 \item If $t_f \notin C$, define $\mathcal{Q}(f):= \mathcal{P}_{J(t_f)} \Big( f|_{J(t_f)}  \Big)$; note that the domain of $f|_{J(t_f)}$ is $J(t_f) \cap (-\infty, t_f) = \big( a(t_f), t_f \big)$.
\end{itemize}

To see that $\mathcal{Q}$ is $\langle \varphi \rangle$-anonymous, consider any $f:(-\infty,t_f) \to S$ and any non-identity $\tau \in \langle \varphi \rangle$.  We need to show that $\mathcal{Q}(f) = \mathcal{Q}\big( f \tau \big)$.  If $t_f \in C$, then by the claim, $t_f$ is a fixed point of both $\varphi$ and $\tau$, and hence $t_f = \tau^{-1}(t_f)$ is the right endpoint of the domain of $f \tau$.  So by definition of $\mathcal{Q}$, $\mathcal{Q}(f) = s_0 = \mathcal{Q}(f \tau)$.  If $t_f \notin C$, then since $a(t_f)$ and $b(t_f)$ are fixed points of $\tau$, $\tau^{-1}(t_f)$ is an element of $ J(t_f)$.  In particular,
\begin{equation}\label{eq_J_shift}
\tau^{-1}(t_f) \notin C, \ J\left(\tau^{-1}(t_f)\right) = J(t_f) =:J, \text{ and } (f \tau)|_{J} = f|_{J} \ \tau|_{J}.
\end{equation}
Then
\begin{align*}
\mathcal{Q} (f) & = \mathcal{P}_{J}\Big( f|_{J} \Big) & \text{(definition of $\mathcal{Q}$)} \\
& = \mathcal{P}_{J}\Big( f|_{J}  \  \tau|_{J} \Big) & \text{(by $\langle \varphi|_{J} \rangle$-anonymity of $\mathcal{P}_{J}$)} \\
& =  \mathcal{P}_J \Big( (f\tau)_J \Big) & \text{(by \eqref{eq_J_shift})} \\
& = \mathcal{Q}\big( f \tau \big). & \text{(by definition of $\mathcal{Q}$, since $\tau^{-1}(t_f) \notin C$)}
\end{align*}

To see that $\mathcal{Q}$ is good, fix any $H: I \to S$, and let
\[
B:= \Big\{ t \in I \ : \  \mathcal{Q} \left( H|_{(\infty,t)} \right) \ne H(t) \Big\}.
\]
Now $B = (B \cap C) \cup (B \setminus C)$, so it suffices to show that each set in the union is null.  Since $C$ is null by assumption, $B \cap C$ is null; so it remains to show that $B \setminus C$ is also null.  Now
\begin{equation}\label{eq_UnionMeasureZero}
B \setminus C \subseteq \bigcup_{J \in \mathcal{J}} (B \cap J). 
\end{equation}
Moreover, by the way we defined $\mathcal{Q}$, $B \cap J$ is exactly the set of $t \in J$ where $\mathcal{P}_J\big( H|_{J \cap (-\infty, t)} \big) \ne H(t)$, which has measure zero because $\mathcal{P}_J$ is good.  Together with \eqref{eq_J_ctble}, this implies that the right side of \eqref{eq_UnionMeasureZero} has measure zero.

\begin{remark}
Suppose $U \le \text{Homeo}^+(I)$, and there is a $C \subset I$ such that $C$ is closed (in $I$), $C$ has measure zero, and each member of $C$ is a fixed point of every member of $U$.  Closure of $C$ ensures we can define $\mathcal{J}=\left\{ \big(a(t),b(t)\big) \ : \ t \in I \setminus C \right\}$  as in the proof above.  Then for each $J \in \mathcal{J}$ and $\varphi \in U$, $\varphi|_J$ is a member of $\text{Homeo}^+(J)$, and $U_J:= \{ \varphi|_{J} \ : \ \varphi \in U \}$ is a subgroup of $\text{Homeo}^+(J)$.  \textbf{If} there exists a good, $U_J$-anonymous predictor for every $J \in \mathcal{J}$, then an argument similar to the one above allows us to amalgamate them to yield a good, $U$-anonymous predictor for functions on $I$. 
\end{remark}

\section{Proof of Theorem \ref{thm_MainNeg}}\label{sec_Neg}

In this section we prove Theorem \ref{thm_MainNeg}, which strengthens Bajpai-Velleman's Theorem \ref{thm_BV_neg}.  We remark that their proof actually showed something a bit stronger than is stated in Theorem \ref{thm_BV_neg} (or in their paper).  Namely, their set $S=\mathbb{R}/\sim$ had the property that there is no good $S$-predictor that is anonymous with respect to the class
\begin{equation}\label{eq_CinftyLip}
C^\infty_{\text{Lipschitz}} \cap \text{Homeo}^+(\mathbb{R})
\end{equation}
where $C^\infty_{\text{Lipschitz}}$ is the set of $f \in C^\infty$ such that for all $k \in \mathbb{N}$, $f^{(k)}$ is bounded.

We strengthen their result in two ways:
\begin{enumerate}
 \item We show that their result still holds when the class \eqref{eq_CinftyLip} is replaced by the smaller class $D^\infty_{\text{Lipschitz}}$ of infinitely Lipschitz \emph{diffeomorphisms}; i.e., the set of invertible $f: \mathbb{R} \to \mathbb{R}$ such that both $f$ and $f^{-1}$ are $C^\infty$ functions, and for every $k \in \mathbb{N}$, $f^{(k)}$ and $(f^{-1})^{(k)}$ are bounded.  
 \item We show that even a weaker kind of  prediction fails.  
\end{enumerate} 
 
We first describe the weaker kind of prediction.  Given a set $S$, let $\boldsymbol{{}^{\mathbb{R}^{\circ}} S}$ denote the set of $S$-valued functions $f$ such that $\text{dom}(f) = \mathbb{R} \setminus \{ h_f \}$ for some $h_f \in \mathbb{R}$ (so the domain of $f$ is the reals with a hole at $h_f$).  If $F$ is a total function on $\mathbb{R}$ and $x \in \mathbb{R}$, $F \setminus \{ x \}$ will denote the restriction of $F$ to the domain $\mathbb{R} \setminus \{x \}$.  A \textbf{weak $S$-predictor} will refer to any function
\[
\mathcal{P}: {}^{\mathbb{R}^{\circ}} S \to S,
\]
and it will be called a \textbf{good} weak $S$-predictor if for every total $F: \mathbb{R} \to S$, the set
\[
\Big\{ x \in \mathbb{R} \ : \  \mathcal{P}\left( F \setminus \{ x \} \right)  \ne F(x)  \Big\}
\]
has Lebesgue measure zero.  Roughly, $\mathcal{P}$ almost always ``predicts" $F(x)$ based on the past \emph{and} future behavior of $F$.

Every good $S$-predictor yields a good weak $S$-predictor, because if $\mathcal{P}$ is a good $S$-predictor, define $\mathcal{Q}$ on ${}^{\mathbb{R}^{\circ}} S$ as follows:  given any $f \in {}^{\mathbb{R}^{\circ}} S$ whose domain has a hole at $h_f$, define
\[
\mathcal{Q}(f):= \mathcal{P}\Big( f \restriction (-\infty, h_f)  \Big).
\]
Then for any total $F: \mathbb{R} \to S$ we have $\mathcal{Q}\Big( F \setminus \{ x \}  \Big) = \mathcal{P} \Big( F \restriction (-\infty,x)  \Big)$, which can only fail to equal $F(x)$ on a measure zero set. 

If $U \le \text{Homeo}^+(\mathbb{R})$, let us say that a weak $S$-predictor $\mathcal{P}$ is \textbf{$\boldsymbol{U}$-anonymous} if whenever $f,g \in {}^{\mathbb{R}^{\circ}} S$ (with holes $h_f$, $h_g$ in their respective domains), $\varphi \in U$, $\varphi(h_f) = h_g$, and $f = g \circ \varphi \restriction \left( \mathbb{R} \setminus \{ h_f \} \right)$, then $\mathcal{P}(f) = \mathcal{P}(g)$.  Similarly as above, any good $U$-anonymous $S$-predictor yields a good $U$-anonymous weak $S$-predictor.

Our goal is to prove:

\begin{theorem}\label{thm_MainNegativeWeak}
There is an equivalence relation $\sim$ on $\mathbb{R}$ such that, letting $S= \mathbb{R}/\sim$,  there is no good, $D^\infty_{\text{Lipschitz}}$-anonymous, weak $S$-predictor.
\end{theorem}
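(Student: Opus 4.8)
The plan is to mimic the Bajpai--Velleman construction of the ``bad'' equivalence relation $\sim$, but to build in enough ``wiggle room'' so that the argument works against all of $D^\infty_{\text{Lipschitz}}$ and against weak predictors, not just ordinary ones.  The idea of Bajpai--Velleman is to choose $\sim$ so that a function $F:\mathbb{R}\to S=\mathbb{R}/\sim$ can ``encode'' a real parameter in a way that is invisible to any predictor that is anonymous with respect to a sufficiently rich class of time distortions; concretely, one arranges two functions $F_0,F_1$ that agree on a large set but differ at a positive-measure set of points, yet are related by a diffeomorphism $\varphi$ in the class, so that anonymity forces the predictor to give the same (wrong for at least one of them) answer.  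First I would fix the class $D^\infty_{\text{Lipschitz}}$ and record its key closure properties: it is a group under composition, contains all translations, and --- crucially --- for any two points and any prescribed ``germ data'' of finite order one can find a member of $D^\infty_{\text{Lipschitz}}$ realizing it; the boundedness of all derivatives of $\varphi$ and $\varphi^{-1}$ is what must be checked by an explicit bump-function-style construction (e.g.\ building $\varphi$ as identity plus a smooth compactly-supported-derivative perturbation, or as a suitable integral of a smooth bounded positive function bounded away from $0$).

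Second, I would set up the equivalence relation.  Following their approach, partition (a co-null subset of) $\mathbb{R}$ into a doubly-indexed family of ``blocks,'' and let $\sim$ identify points according to a pattern that makes two reals equivalent exactly when the ``local data'' they carry is the same; the point is to make $S=\mathbb{R}/\sim$ large enough that there is a function $F:\mathbb{R}\to S$ whose restriction to $\mathbb{R}\setminus\{x\}$ does not determine $F(x)$ even up to $\sim$, on a positive-measure set of $x$, \emph{and} such that moving the ``hole'' by any $\varphi\in D^\infty_{\text{Lipschitz}}$ produces a configuration that the relation cannot distinguish.  I expect to define $F$ on each block using a continuous injection of the block into $S$ chosen so that the $\sim$-class records, say, the position of the hole relative to the block endpoints up to the action of the distortion group restricted to that block.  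The weak-predictor strengthening is handled by making the encoding symmetric: the data to the left and to the right of the hole should be interchangeable under some $\varphi\in D^\infty_{\text{Lipschitz}}$ fixing the hole, so that knowing the full punctured function $F\setminus\{x\}$ still leaves genuine ambiguity about $F(x)$.

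Third, the core argument: suppose toward a contradiction that $\mathcal{P}$ is a good, $D^\infty_{\text{Lipschitz}}$-anonymous weak $S$-predictor.  For a positive-measure set of holes $x$, I would exhibit two total functions $F,G:\mathbb{R}\to S$ with $F(x)\not\sim G(x)$ (hence unequal in $S$) but with $G\setminus\{x'\} = (F\setminus\{x\})\circ\varphi\restriction(\mathbb{R}\setminus\{x'\})$ for some $\varphi\in D^\infty_{\text{Lipschitz}}$ sending $x'\mapsto x$; anonymity then gives $\mathcal{P}(G\setminus\{x'\}) = \mathcal{P}(F\setminus\{x\})$, and since a single value in $S$ cannot simultaneously equal both $F(x)$ and $G(x')$, the predictor errs on at least one of $F,G$ at that point.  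Quantifying over a positive-measure family of such $x$ (with a uniform, measurable choice of the witnessing $\varphi$ and of the pair $F,G$, built from a single ``master'' function by the encoding) yields a positive-measure set of errors for at least one fixed total function, contradicting goodness.  The main obstacle, and where I expect to spend the most care, is the construction in the second step: making $\sim$ simultaneously (i) coarse enough that the ambiguity survives seeing the \emph{entire} punctured function (the ``weak'' part), (ii) compatible with the \emph{diffeomorphism} requirement --- the witnessing maps must be genuine bi-infinite smooth diffeomorphisms with all derivatives of both $\varphi$ and $\varphi^{-1}$ bounded, which is a real constraint on how much local ``shuffling'' we can do --- and (iii) still leaving $S$ and $F$ concrete enough that the error set is genuinely positive-measure (here Fact \ref{fact_WO_MeasureZero} is the wrong direction, so one needs a direct measure-theoretic estimate showing the set of ``ambiguous holes'' is not null).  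Assembling these three constraints into one coherent block construction, and verifying the Lipschitz-of-all-orders bounds on the glueing diffeomorphisms, is the heart of the proof.
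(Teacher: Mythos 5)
There is a genuine gap: your proposal never identifies the mechanism that actually makes the construction work, and the contradiction you sketch in your third step does not close. The paper's argument (following Bajpai--Velleman) does not produce, for each hole $x$, a \emph{pair} of total functions on which the predictor must err at $x$. Instead it uses a \emph{single} function, namely $E(x)=[x]_\sim$, and an equivalence relation $\sim$ engineered so that for every pair $(x,y)$ there is $\varphi\in D^\infty_{\text{Lipschitz}}$ with $\varphi(x)=y$ and $z\sim\varphi(z)$ for all $z\neq x$. The latter condition means $E\setminus\{x\}=\big(E\setminus\{y\}\big)\circ\varphi\restriction(\mathbb{R}\setminus\{x\})$, so anonymity forces $\mathcal{P}(E\setminus\{x\})$ to be \emph{constant} as $x$ ranges over $\mathbb{R}$; that constant is one equivalence class $C$, the prediction is correct only for $x\in C$, and $C$ fails to have full measure. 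Your scheme --- for each $x$ exhibit $F_x,G_x$ with $F_x(x)\neq G_x(x')$ related by a distortion, so the predictor errs on at least one of them at that point --- does not yield any single total function with a positive-measure error set: the erring function varies with $x$, and ``a uniform, measurable choice ... built from a single master function'' is precisely the missing argument, not a routine step. The paper sidesteps this entirely by making the witness function $E$ canonical and independent of the hole.

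The second missing idea is how to make $\sim$ simultaneously small-classed and rich enough in witnesses. The paper fixes a \emph{countable} family $\mathcal{F}$ of partial injections (rational lines together with explicit bump-function transition maps $F_{A,\Delta,\gamma}$ with rational parameters), lets $\sim$ be generated by their graphs (so every class is countable, hence null), and then --- this is the analytic heart --- shows that through every point $(w,z)$ of the plane there is a $G\in D^\infty_{\text{Lipschitz}}$ that agrees at every $x\neq w$ with \emph{some member of the fixed countable catalogue} $\mathcal{F}$, obtained by concatenating infinitely many $F_{A_n,\Delta_n,\gamma_n}$ with $\gamma_n/\Delta_n^{\,k-1}\to 0$ so that all derivatives match up at $w$ and $G'$ stays bounded away from $0$. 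Your proposal correctly flags that the bounded-derivative diffeomorphism requirement is a real constraint, but without the countability requirement on the catalogue (and the resulting null equivalence classes) the relation $\sim$ you describe has no reason to satisfy the hypothesis that no class has full measure, and without the ``agree with the catalogue off the hole'' condition the anonymity argument has nothing to act on. As written, the proposal is a plan to rediscover the construction rather than a proof.
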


Our proof of Theorem \ref{thm_MainNegativeWeak} relies heavily on the proof of \cite[Theorem 8]{MR3552748}.  Much of the following lemma was implicit in their proof.

\begin{lemma}\label{lem_BlockingEquivRel}
Suppose $U \subseteq \text{Homeo}^+(\mathbb{R})$ and $\sim$ is an equivalence relation on $\mathbb{R}$ such that:
\begin{enumerate}
 \item No equivalence class has full measure;\footnote{A set $X$ has full measure if $\mathbb{R} \setminus X$ has Lebesgue measure zero.} and
 \item For every $P=(x,y) \in \mathbb{R}^2$ there exists a $\varphi \in U$ such that:
 \begin{enumerate}
  \item $\varphi(x)=y$; and
  \item if $z \ne x$ then $z \sim \varphi(z)$; i.e., each equivalence class is closed under $\varphi \restriction \left( \mathbb{R} \setminus \{x \} \right)$.
 \end{enumerate}
\end{enumerate}
Then there is no $U$-anonymous, good, weak $\mathbb{R}/\sim$-predictor.  In fact, the particular function $x \mapsto [x]_\sim$ witnesses the non-goodness of \textbf{every} $U$-anonymous, weak $\mathbb{R}/\sim$-predictor.
\end{lemma}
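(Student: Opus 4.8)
The plan is to show that \emph{every} $U$-anonymous weak $\mathbb{R}/\sim$-predictor $\mathcal{P}$ predicts the single function $F\colon\mathbb{R}\to\mathbb{R}/\sim$ defined by $F(x):=[x]_\sim$ correctly only on one $\sim$-equivalence class; since hypothesis (1) says no such class has full measure, the set of $x$ at which $\mathcal{P}$ errs on $F$ is not Lebesgue-null, so $\mathcal{P}$ is not good. This simultaneously proves the ``in fact'' clause, because $\mathcal{P}$ will be an arbitrary $U$-anonymous weak predictor and $F$ the fixed witness $x\mapsto[x]_\sim$.

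First I would fix such a $\mathcal{P}$ and prove the key ``collapsing'' fact: the value $\mathcal{P}\big(F\setminus\{x\}\big)$ is independent of $x\in\mathbb{R}$. Given $x,x'\in\mathbb{R}$, apply hypothesis (2) to the pair $(x,x')$ to get $\varphi\in U$ with $\varphi(x)=x'$ and $z\sim\varphi(z)$ for all $z\ne x$. Put $f:=F\setminus\{x\}$ and $g:=F\setminus\{x'\}$, so $h_f=x$, $h_g=x'$, and $\varphi(h_f)=h_g$. For each $z\ne x$, injectivity of $\varphi$ gives $\varphi(z)\ne\varphi(x)=x'$, so $\varphi(z)$ lies in the domain of $g$ and $g\big(\varphi(z)\big)=[\varphi(z)]_\sim=[z]_\sim=f(z)$, the middle equality holding because $z\sim\varphi(z)$. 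Hence $f=g\circ\varphi\restriction\big(\mathbb{R}\setminus\{x\}\big)$, and $U$-anonymity of $\mathcal{P}$ gives $\mathcal{P}(f)=\mathcal{P}(g)$, i.e.\ $\mathcal{P}\big(F\setminus\{x\}\big)=\mathcal{P}\big(F\setminus\{x'\}\big)$. So there is a single $c\in\mathbb{R}/\sim$ with $\mathcal{P}\big(F\setminus\{x\}\big)=c$ for every $x$.

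The conclusion is then immediate: $\mathcal{P}$ predicts $F$ correctly at $x$ exactly when $[x]_\sim=c$, so the ``good'' set $\{x:\mathcal{P}(F\setminus\{x\})=F(x)\}$ equals the equivalence class $c$, viewed as a subset of $\mathbb{R}$. By hypothesis (1) this set does not have full measure, i.e.\ its complement $\{x:\mathcal{P}(F\setminus\{x\})\ne F(x)\}$ is not Lebesgue-null. Thus $\mathcal{P}$ is not a good weak $\mathbb{R}/\sim$-predictor, and since $\mathcal{P}$ was arbitrary, the map $x\mapsto[x]_\sim$ witnesses the non-goodness of every $U$-anonymous weak $\mathbb{R}/\sim$-predictor.

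There is no substantial obstacle here; the only point needing care is the bookkeeping with the ``holes'' in the domains, namely verifying that $g\circ\varphi$ is genuinely defined on all of $\mathbb{R}\setminus\{x\}$ (equivalently, that $\varphi$ maps no point other than $x$ into the hole of $g$), which is exactly where injectivity of $\varphi$ together with $\varphi(x)=x'$ is used. I should also double-check the trivial case $x=x'$, where one may take $\varphi$ with $\varphi(x)=x$ and the argument degenerates harmlessly to $\mathcal{P}(f)=\mathcal{P}(f)$.
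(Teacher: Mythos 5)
Your proposal is correct and follows essentially the same route as the paper's proof: use hypothesis (2) together with $U$-anonymity to show that $\mathcal{P}\big(F\setminus\{x\}\big)$ is constant in $x$, and then observe that correct prediction can occur only on the single equivalence class equal to that constant value, which by hypothesis (1) does not have full measure. Your extra care with the domain bookkeeping (injectivity of $\varphi$ guaranteeing that no $z\ne x$ is sent into the hole of $g$) is exactly the verification the paper leaves implicit in the line ``$E\setminus\{x\}=E\setminus\{y\}\circ\varphi\restriction(\mathbb{R}\setminus\{x\})$''.
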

\begin{proof}
Let $S:=\mathbb{R}/\sim$, and suppose toward a contradiction that $\mathcal{P}:{}^{\mathbb{R}^{\circ}} S \to S$ is a $U$-anonymous, good, weak $S$-predictor.  Let $E: \mathbb{R} \to S$ be defined by $E(x) = [x]_\sim$.  We will get a contradiction by proving that $\mathcal{P}\Big( E \setminus \{ x \}  \Big) \ne E(x)$ holds for positively many $x$.  It suffices to prove that 
\begin{equation}\label{eq_EquivAnyPair}
\forall x,y \in \mathbb{R} \ \ \mathcal{P}\Big( E \setminus \{ x \}  \Big) = \mathcal{P} \Big( E \setminus \{ y \} \Big), 
\end{equation}
because if the equivalence class $C$ were their constant value, then the equation 
\begin{equation}\label{eq_PredictAtxE}
\mathcal{P}\Big( E \setminus \{ x \} \Big) = E(x) \ \Big( = [x]_\sim \Big)
\end{equation}
could hold only if $x \in C$; and since $C$ does not have full measure, this would yield positively many $x$ where the equation \eqref{eq_PredictAtxE} fails.

To prove \eqref{eq_EquivAnyPair} pick any $x,y \in \mathbb{R}$. By assumption, there is a $\varphi \in U$ such that $\varphi(x)=y$ and $z \sim \varphi(z)$ whenever $z \ne x$.  It follows that $E \setminus \{x \} = E \setminus \{ y \} \circ \varphi \restriction \Big(\mathbb{R} \setminus \{ x \} \Big)$.  Then by $U$-anonymity of $\mathcal{P}$, $\mathcal{P}\Big( E \setminus \{ x \}  \Big) = \mathcal{P} \Big( E \setminus \{ y \} \Big)$.
\end{proof}

\begin{corollary}\label{cor_BlockingFunctions}
Suppose $U \subseteq \text{Homeo}^+(\mathbb{R})$ and $\mathcal{F}$ is a family such that:
\begin{enumerate}
 \item $\mathcal{F}$ is a countable set of partial, injective functions from $\mathbb{R} \to \mathbb{R}$;
 \item\label{item_ThruEveryPoint} For every $(x,y) \in \mathbb{R}^2$ there is some $\varphi$ such that:
 \begin{enumerate}
  \item $\varphi \in U$;
  \item $\varphi(x)=y$; and
  \item If $z \ne x$, then there is some $f \in \mathcal{F}$ such that $\varphi(z)  = f(z)$.

\end{enumerate}  
\end{enumerate}

Let $\sim$ be the equivalence relation on $\mathbb{R}$ generated by the relation
\[
R:=\Big\{ (u,v) \ : \ \exists f \in \mathcal{F}  \ \ f(u)=v \Big\}.
\]
Then there is no good weak $U$-anonymous $\mathbb{R}/\sim$ predictor.
\end{corollary}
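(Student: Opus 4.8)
The plan is to reduce the statement directly to Lemma~\ref{lem_BlockingEquivRel}, applied with the same $U$ and with the equivalence relation $\sim$ generated by $R$. It then suffices to check the two hypotheses of that lemma. The second hypothesis is almost a verbatim copy of assumption~\ref{item_ThruEveryPoint}: given $P=(x,y)\in\mathbb{R}^2$, take the $\varphi\in U$ furnished by that assumption, so $\varphi(x)=y$; and for every $z\neq x$ there is some $f\in\mathcal{F}$ with $\varphi(z)=f(z)$, so $(z,\varphi(z))\in R$ and hence $z\sim\varphi(z)$. Thus each $\sim$-class is closed under $\varphi\restriction(\mathbb{R}\setminus\{x\})$, which is exactly what the lemma demands.

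The first hypothesis of Lemma~\ref{lem_BlockingEquivRel} --- that no $\sim$-class has full measure --- is the only point requiring an argument, and it is where the countability and injectivity of $\mathcal{F}$ enter. I would show that every $\sim$-class is in fact countable, hence null. Unwinding the construction of the generated equivalence relation, $u\sim v$ iff there is a finite chain $u=w_0,w_1,\dots,w_n=v$ such that for each $i$ either $w_{i+1}=f_i(w_i)$ or $w_i=f_i(w_{i+1})$ for some $f_i\in\mathcal{F}$. Since each $f\in\mathcal{F}$ is a single-valued partial function and is injective, a fixed point $w_i$ has at most countably many possible ``neighbors'' $w_{i+1}$: for each of the countably many $f\in\mathcal{F}$ there is at most one forward value $f(w_i)$ and at most one backward value $f^{-1}(w_i)$. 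Hence a chain of given length $n$ starting at $u$ is determined by a length-$n$ sequence of pairs (element of $\mathcal{F}$, direction), of which there are only countably many, and each such sequence determines at most one endpoint; summing over $n$, the $\sim$-class of $u$ is countable, and therefore has Lebesgue measure zero and certainly does not have full measure.

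With the two hypotheses verified, Lemma~\ref{lem_BlockingEquivRel} yields that no good, $U$-anonymous, weak $\mathbb{R}/\sim$-predictor exists (indeed $x\mapsto[x]_\sim$ refutes every $U$-anonymous weak predictor). The only genuinely substantive step is the counting argument of the previous paragraph, and even that is routine once one spells out the chain description of the generated equivalence relation; everything else is a mechanical transfer of the hypotheses into the form required by the lemma.
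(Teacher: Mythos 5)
Your proposal is correct and follows essentially the same route as the paper: verify the two hypotheses of Lemma~\ref{lem_BlockingEquivRel}, with the closure hypothesis being an immediate unwinding of assumption~\ref{item_ThruEveryPoint} and the full-measure hypothesis following from countability of each $\sim$-class. Your chain-counting argument merely spells out in detail what the paper asserts in one line (each class is countable since $\mathcal{F}$ is a countable family of injective partial functions), so there is no substantive difference.
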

\begin{proof}
We verify that $\sim$ satisfies the assumptions of Lemma \ref{lem_BlockingEquivRel}.  Each $\sim$-equivalence class is countable, because $\mathcal{F}$ is countable and each member of $\mathcal{F}$ is injective (``countable-to-one" would suffice).  So each $\sim$-equivalence class not only fails to have full measure, but in fact has measure zero.  Consider any $(x,y) \in \mathbb{R}^2$. By assumption, there is a $\varphi \in U$ such that $\varphi(x)=y$ and whenever $z \ne x$, $\varphi(z) = f(z)$ for some $f \in \mathcal{F}$.  Then $(z,\varphi(z)) \in R$ and $R$ is contained in $\sim$, so $z \sim \varphi(z)$.
\end{proof}

We first sketch the outline of Bajpai-Velleman's proof of Theorem \ref{thm_BV_neg}.  They first fix a single, increasing $C^\infty$ bijection
\[
s: [0,1] \to [0,1]
\]
with vanishing derivatives at the endpoints (i.e., $s^{(k)}_-(0) = 0 = s^{(k)}_+(1)$ for all $k \in \mathbb{N}$, where the minus and plus denote left and right derivatives, respectively).  A \textbf{rational point} is a point in the plane such that both coordinates are rational.  By \textbf{rational line} we will mean a line with positive slope that passes through two distinct rational points.\footnote{Equivalently, a rational line is a line that has (positive) rational slope and passes through at least one rational point.  Note that there are only countably many rational lines, and that the intersection of two non-parallel rational lines is a rational point.}
If $\Psi:\mathbb{R} \to \mathbb{R}$ is a rational line (i.e., an affine function whose graph is a rational line), we write $s \circ \Psi$ to denote the obvious partial function with domain $\Psi^{-1} \Big( [0,1]  \Big)$.  They show that the countable collection
\begin{align}\label{eq_BV_F}
\begin{split}
\mathcal{F}:=   \Big\{ \Phi \circ s \circ \Psi \ :\  \Phi, \Psi \text{ are rational lines } \Big\}
\end{split}
\end{align}
witnesses the assumptions of Corollary \ref{cor_BlockingFunctions}.  This basically amounts to showing that, given any $(x,y) \in \mathbb{R}^2$, one can form an infinite concatenation of members of $\mathcal{F}$ to yield a strictly increasing $C^\infty$ function $h: (-\infty,x) \to \mathbb{R}$, whose approach (from the left) to the point $(x,y)$ is flat enough that $\hat{h}:=h ^\frown (x,y)$ will still be a $C^\infty$ function from $(-\infty,x] \to \mathbb{R}$, with $\hat{h}^{(k)}_-(x) = 0$ for all $k \in \mathbb{N}$.  Then extending $\hat{h}$ to all of $\mathbb{R}$ (in a $C^\infty$ manner) is easy.

We follow a similar strategy to prove Theorem \ref{thm_MainNegativeWeak}, though we must use a different collection in order to avoid vanishing first derivatives.  Fix any ``bump" function
\[
b: [0,1] \to \mathbb{R}
\]
with the following properties:
\begin{enumerate}[label=(\roman*)]
 \item $b(0) = 0 = b(1)$;
 \item $b(x) > 0$ for all $x \in (0,1)$;
 \item $b \in C^\infty$ (with one-sided derivatives taken at 0 and 1) and $b^{(k)}(0)=0=b^{(k)}(1)$ for all $k \in \mathbb{N}$; and
 \item $\int_{[0,1]} b = 1$.

\end{enumerate}

To get such a $b$, one could start, for example, with the common bump function
\[
\Psi(x)=
\begin{cases}
 \exp \left( \frac{-1}{1-x^2}\right) & \text{ if } x \in (-1,1) \\
 0 & \text{ otherwise,}
\end{cases}
\]
let $\Phi(x)=\Psi(2x-1)$, and then define $b:[0,1] \to \mathbb{R}$ by $b(x) = \frac{\Phi(x)}{\int_{[0,1]} \Phi}$. 

\begin{definition}\label{def_GlueFcn}
Given any point $A=(p,q)$, any $\Delta>0$ and any $\gamma > 0$, define
\[
F_{A,\Delta,\gamma}: [p,p + \Delta] \to [q, q + \Delta(1 + \gamma) ]
\]
by
\[
F_{A,\Delta,\gamma}(x)=q + \Delta \int_{\left[ 0, \frac{x-p}{\Delta} \right]} (1 + \gamma b).
 \]
\end{definition}

$F_{A,\Delta,\gamma}$ will serve as a smooth transition function from the point $A=(p,q)$ to the point $\big(p+\Delta, q + \Delta(1 + \gamma) \big)$, with derivative 1 (and vanishing higher derivatives) at those endpoints (see Figure \ref{fig_F_ADeltaGamma}).  The $\Delta$ and $\gamma$ parameters are used to control the norm of the higher derivatives in our subsequent construction.  

\begin{figure}[h]
\caption{The function $F_{A,\Delta,\gamma}$}
\label{fig_F_ADeltaGamma}
\centering
\includegraphics[scale=.4]{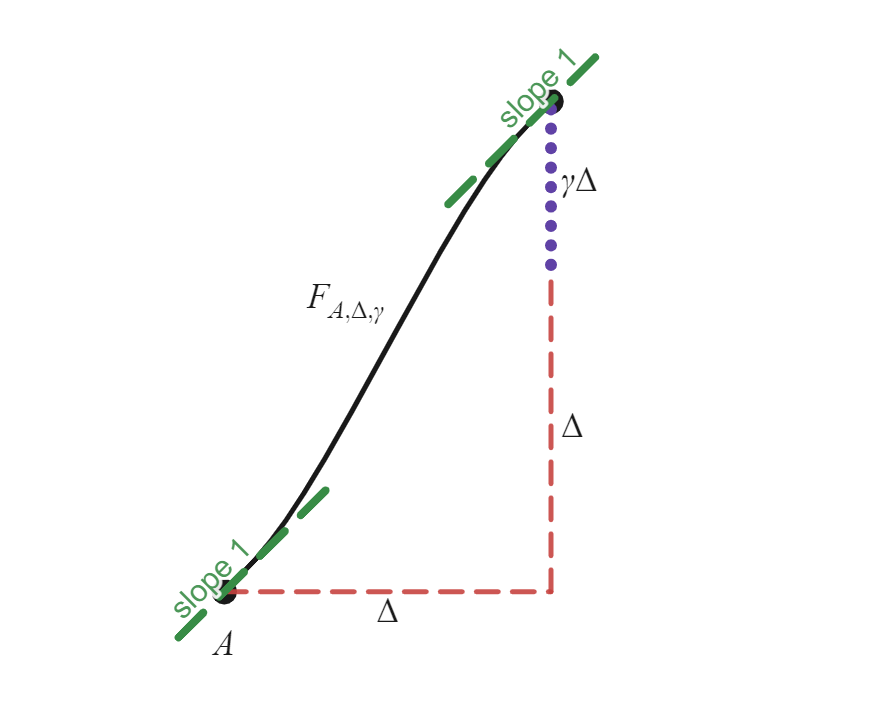}
\end{figure}

The key features are:
\begin{enumerate}[label=(\Roman*)]
 \item\label{item_Endpoints} $F_{A,\Delta,\gamma}(p) = q$ and $F_{A,\Delta,\gamma}(p+\Delta) = q + \Delta(1+\gamma)$;

 \item\label{item_Fprime} The Fundamental Theorem of Calculus and Chain Rule yield, for all $x \in [p,p + \Delta]$:
 \[
 F_{A,\Delta,\gamma}'(x) = \Delta \left( 1 + \gamma b\left( \frac{x-p}{\Delta} \right)   \right)\left( \frac{1}{\Delta}\right) =1 + \gamma b\left( \frac{x-p}{\Delta} \right).
 \]
In particular, since $\gamma > 0$ and $b > 0$ on $(0,1)$, $F_{A,\Delta,\gamma}$ is increasing.  Together with \ref{item_Endpoints}, it follows that $F_{A,\Delta,\gamma}$ is an increasing bijection from $[p,p+\Delta]$ to $[q,q+\Delta(1+\gamma)]$.  Also, notice that for all $x \in [p,p + \Delta]$: 
\[
 \lvert F'_{A,\Delta,\gamma}(x)-1 \rvert \le \gamma \lVert b \rVert,
\]
 where $\lVert \cdot \rVert$ denotes the sup-norm.

 \item\label{item_DerivAtEndpoints} Since $b(0)=0=b(1)$, $F_{A,\Delta,\gamma}'(p) = 1 = F_{A,\Delta,\gamma}'(p + \Delta)$; 
 
 \item\label{item_HigherDerivsBound} Following part \ref{item_Fprime}, we see that
 \[
\forall k \ge 2 \ \  F^{(k)}_{A,\Delta, \gamma}(x)= \frac{\gamma}{\Delta^{k-1}} b^{(k-1)}\left( \frac{x-p}{\Delta} \right)
 \] 
 so, in particular,
 \begin{equation*}
 \forall k \ge 2 \ \  \lVert F^{(k)}_{A,\Delta,\gamma} \rVert \le \frac{\gamma}{\Delta^{k-1}} \lVert b^{(k-1)} \rVert. 
 \end{equation*}

\end{enumerate}

Let $\mathcal{F}$ denote the collection of all rational lines of positive slope, together with all functions of the form $F_{A,\Delta,\gamma}$ where $A$ is a rational point and $\Delta$ and $\gamma$ are positive rational numbers.   We will show that $\mathcal{F}$ satisfies the assumptions of Corollary \ref{cor_BlockingFunctions}, with $U:= D^\infty_{\text{Lipschitz}}$.  Clearly $\mathcal{F}$ is countable, and consists of injective functions.  It remains to verify clause \ref{item_ThruEveryPoint} of Corollary \ref{cor_BlockingFunctions}. 

\begin{lemma}\label{lem_RationalPointsApproac}
Given any point $P=(w,z)$ in the plane, there exists sequences $A_n=(p_n,q_n)_n$ and $(\gamma_n)_n$ such that:
\begin{enumerate}
 \item $(p_n)_n$ and $(q_n)_n$ are increasing sequences of rational numbers converging to $w$ and $z$, respectively.
 \item $\gamma_n$ is a positive rational number for all $n$.
  \item\label{item_DeltanLess1} Let $\Delta_n:= p_{n+1}-p_n$.   Then $0 < \Delta_n < 1$ and $q_{n+1} = q_n + \Delta_n(1 + \gamma_n)$ for all $n$.
  \item\label{item_LimitQuotZero} $\lim_{n \to \infty} \frac{\gamma_n}{\left(\Delta_n\right)^{n-1}} = 0$.
\end{enumerate}
\end{lemma}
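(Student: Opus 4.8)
The plan is to build the two sequences directly as strictly increasing rational data climbing toward $P=(w,z)$, and to use condition \ref{item_DeltanLess1} to \emph{define} $\gamma_n$ rather than as a constraint. If I choose rationals $p_1<p_2<\cdots$ with $p_n\to w$ and rationals $q_1<q_2<\cdots$ with $q_n\to z$, and arrange that $q_{n+1}-q_n>p_{n+1}-p_n$ for every $n$, then setting $\Delta_n:=p_{n+1}-p_n$ and $\gamma_n:=\frac{q_{n+1}-q_n}{\Delta_n}-1$ automatically produces positive rationals $\gamma_n$ with $q_{n+1}=q_n+\Delta_n(1+\gamma_n)$. Writing $d_n:=(q_{n+1}-q_n)-\Delta_n>0$ for the ``excess'' of the $n$-th secant slope over $1$, we have $\gamma_n=d_n/\Delta_n$, so condition \ref{item_LimitQuotZero} becomes exactly $d_n/(\Delta_n)^n\to 0$. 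Telescoping the excesses gives $\sum_n d_n=(z-q_1)-(w-p_1)$, which is forced once $p_1,q_1$ are chosen; the real content is therefore to realize a prescribed (possibly irrational) positive value as $\sum_n d_n$ with each $d_n$ a positive rational of size $o((\Delta_n)^n)$.

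First I would fix, by density of $\mathbb{Q}$, rationals $p_1<p_2<\cdots$ converging to $w$ with every gap $\Delta_n=p_{n+1}-p_n$ lying in $(0,1)$; then \ref{item_DeltanLess1} is automatic and $\sum_n\Delta_n=w-p_1<\infty$. Put $c_n:=(\Delta_n)^n/2^n$, positive rationals with $\sum_n c_n\le 1$; any choice $d_n\in(0,c_n)$ then yields $\gamma_n/(\Delta_n)^{n-1}=d_n/(\Delta_n)^n<2^{-n}\to 0$, giving \ref{item_LimitQuotZero}. Next I would pick a rational $t$ with $(z-w)-\tfrac12\sum_n c_n<t<z-w$ (the interval is nonempty since $\sum_n c_n>0$) and set $D':=(z-w)-t$, so $0<D'<\tfrac12\sum_n c_n$; choosing $t$ this way, rather than choosing $q_1$ outright, is what will force $q_1$ to be rational.

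The one technical step is a routine greedy construction of positive rationals $d_n<c_n$ with $\sum_n d_n=D'$. I would produce them one at a time, maintaining the invariant $0<R_n\le\tfrac12\sum_{k\ge n}c_k$ for the remaining amount $R_n:=D'-\sum_{k<n}d_k$ (the base case $n=1$ being the choice of $t$). At stage $n$ the invariant gives $R_n-\tfrac12\sum_{k>n}c_k\le\tfrac12 c_n<\min(c_n,R_n)$, so the interval $\big(\max(0,\,R_n-\tfrac12\sum_{k>n}c_k),\ \min(c_n,R_n)\big)$ is a nonempty open interval of positive reals; picking a rational $d_n$ inside it keeps $R_{n+1}>0$, $d_n<c_n$, and $R_{n+1}\le\tfrac12\sum_{k\ge n+1}c_k$, so the invariant persists. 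Since $\sum_n c_n$ converges, its tails tend to $0$, hence $R_n\to 0$ and $\sum_n d_n=D'$.

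Finally I would set $q_1:=p_1+t$ and $q_{n+1}:=q_n+\Delta_n+d_n$. Then $q_1$ is rational with $q_1<z$ (because $t<z-w$), the $q_n$ are strictly increasing rationals, and $q_n=q_1+(p_n-p_1)+\sum_{k<n}d_k\to (p_1+t)+(w-p_1)+D'=z$, with $q_n<z$ throughout. Putting $\gamma_n:=d_n/\Delta_n$ gives positive rationals satisfying $q_{n+1}=q_n+\Delta_n(1+\gamma_n)$ and $\gamma_n/(\Delta_n)^{n-1}=d_n/(\Delta_n)^n<2^{-n}\to 0$, so all four conclusions hold. The only place anything delicate occurs is the greedy step, and even there the sole subtlety is bookkeeping that keeps every $d_n$ strictly positive (so that every $\gamma_n>0$) while still exhausting the irrational quantity $D'$; the ``room'' for this is precisely the fact that $\sum_n c_n$ can be made comfortably larger than $D'$ by taking $t$ close to $z-w$.
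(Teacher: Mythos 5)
Your proof is correct and follows essentially the same recursive scheme as the paper's: your remaining excess $R_n$ is exactly the paper's vertical distance $v_n$ from $A_n$ to the slope-$1$ line through $P$, and both arguments dole out that excess in rational increments $\gamma_n\Delta_n=o\bigl((\Delta_n)^n\bigr)$, differing only in the bookkeeping (your bound $d_n<(\Delta_n)^n/2^n$ with tail-sum invariant versus the paper's invariant $v_n<(\Delta_n)^n/n$). One cosmetic slip: the asserted inequality $\tfrac12 c_n<\min(c_n,R_n)$ can fail when $R_n$ is small, but your interval is still nonempty because its left endpoint $\max\bigl(0,\,R_n-\tfrac12\sum_{k>n}c_k\bigr)$ is strictly below both $c_n$ and $R_n$ in either case of the maximum.
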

\begin{proof}
Fix an increasing sequence $(p_n)_n$ of rational numbers converging to $w$ such that $\Delta_n := p_{n+1}-p_n < 1$ for all $n$.  Let $L$ be the line of slope 1 that passes through $P$.\footnote{Notice that, unless $P$ happens to be a rational point, $L$ will not be a rational line.}  For any point $B$, let $\text{vDist}(L,B)$ denote the vertical distance from $L$ to $B$.  

Recursively define the $q_n$, $A_n$, and $\gamma_n$, together with the auxilliary variable
\[
v_n:= \text{vDist}(L,A_n),
\]
 as follows:  fix a rational $q_1$ such that $A_1:=(p_1,q_1)$ lies strictly below the line $L$, and
\begin{equation}\label{eq_BaseRecursInd}
v_1 < \Delta_1.
\end{equation}
Given that $q_n$ is defined (and hence so are $A_n=(p_n,q_n)$ and $v_n = \text{vDist}(L,A_n)$), let $\gamma_n$ be a positive rational number such that 
\begin{equation}\label{eq_IneqDeltanplus1}
v_n - \frac{\left( \Delta_{n+1}\right)^{n+1}}{n+1} < \gamma_n \Delta_n < v_n;
\end{equation}
this is possible because the $\Delta$'s are positive.  Define $q_{n+1}:=q_n + \Delta_n(1 + \gamma_n)$, $A_{n+1} := (p_{n+1},q_{n+1})$, and $v_{n+1}:= \text{vDist}(L,A_{n+1})$.  Note that $q_{n+1}$ is rational, provided that $q_n$ was rational.

We inductively verify that 
\begin{equation}
v_n < \frac{(\Delta_n)^n}{n} \tag{\text{IH}$_n$}
\end{equation}
holds for all $n$.  It holds for $n=1$ by \eqref{eq_BaseRecursInd}.  Now suppose $\text{IH}_n$ holds.  Since $L$ has slope 1, it follows from the definition of $A_{n+1}$ that $v_{n+1} = v_n - \gamma_n \Delta_n$, which by \eqref{eq_IneqDeltanplus1} is less than $\frac{(\Delta_{n+1})^{n+1}}{n+1}$; so (IH$_{n+1}$) holds.

Inequalities (IH$_n$) and \eqref{eq_IneqDeltanplus1} yield 
\[
\gamma_n \Delta_n < v_n < \frac{\left( \Delta_n \right)^n}{n}, 
\]
and hence
\[
\frac{\gamma_n}{\left( \Delta_n \right)^{n-1}} < \frac{1}{n}
\]
for all $n$.  This ensures part \eqref{item_LimitQuotZero} of the lemma.
\end{proof}

Fix any point $P=(w,z)$ in the plane, and let $A_n=(p_n,q_n)_n$, $\Delta_n=p_{n+1}-p_n$, and $\gamma_n$ be rational numbers as given by Lemma \ref{lem_RationalPointsApproac}.  For each $n$, let
\[
F_n:=F_{A_n, \Delta_n, \gamma_n}: [p_n, p_{n+1}] \to [q_n, \underbrace{q_n + \Delta_n(1+\gamma_n)}_{q_{n+1}}]
\]
using notation from Definition \ref{def_GlueFcn}.  See Figure \ref{fig_PieceTogether}.  

\begin{figure}[h]
\caption{The $F_n$ functions}
\label{fig_PieceTogether}
\centering
\includegraphics[scale=.4]{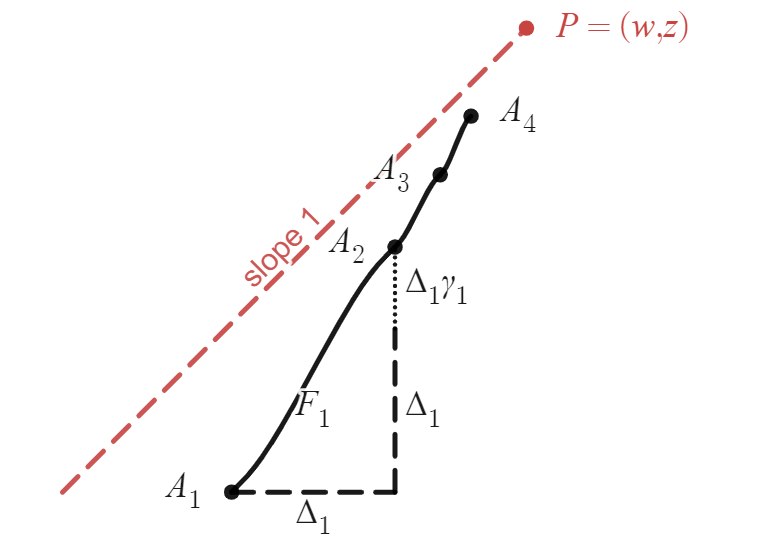}
\end{figure}

Let $F:= \bigcup_n F_n$, which maps from $[p_1,w) \to \mathbb{R}$.  By \ref{item_DerivAtEndpoints} on page \pageref{item_DerivAtEndpoints}, $F_n$ and $F_{n+1}$ meet at the point $A_{n+1}$, both with first derivative 1 and vanishing higher derivatives; in particular,
\begin{equation}\label{eq_C_infty_before_w}
F \text{ is a } C^\infty \text{ function on } [p_1,w) \text{ and } \lim_{x \nearrow w} F(x) = z.  
\end{equation}

By \ref{item_Fprime}, we have the following for all $n \in \mathbb{N}$:

\begin{equation}\label{eq_BoundFirstDeriv}
\forall x \in [p_n,\underbrace{p_n + \Delta_n}_{p_{n+1}} ] \  \lvert F'_{n}(x)-1 \rvert \le \gamma_n \lVert b \rVert.
\end{equation}
Since $\gamma_n \to 0$ as $n \to \infty$ (by requirements   \eqref{item_DeltanLess1} and \eqref{item_LimitQuotZero} of Lemma \ref{lem_RationalPointsApproac}), it follows that 
\begin{equation}\label{eq_LimitFirstDeriv}
\lim_{x \nearrow w} F'(x) = 1.
\end{equation}

By \ref{item_HigherDerivsBound}, the following also holds for all $n \in \mathbb{N}$, where $\alpha_{k,n}$ is defined as indicated:
\begin{equation}\label{eq_HigherDerivBoundDefAlpha}
\forall k \ge 2  \ \ \lVert F^{(k)}_{n} \rVert \le \underbrace{ \frac{\gamma_n}{(\Delta_n)^{k-1}} }_{\alpha_{k,n}} \lVert b^{(k-1)} \rVert. 
\end{equation}


Requirement \ref{item_DeltanLess1} of Lemma \ref{lem_RationalPointsApproac} (that $0 < \Delta_n < 1$) ensures that 
\[
n \ge k \ \implies \ 0 <  \underbrace{\frac{\gamma_n}{(\Delta_n)^{k-1}}}_{\alpha_{k,n}} \le  \frac{\gamma_n}{(\Delta_n)^{n-1}},
\]
the last term of which goes to 0 as $n \to \infty$ by requirement \ref{item_LimitQuotZero} of Lemma \ref{lem_RationalPointsApproac}.  So
\[
\lim_{n \to \infty} \alpha_{n,k} = 0,
\]
and together with \eqref{eq_HigherDerivBoundDefAlpha} this yields
\begin{equation}\label{eq_LimitHigherDerivZero}
\forall k \ge 2 \ \ \lim_{n \to \infty} \lVert F^{(k)}_n \rVert =0.
\end{equation}

This entire construction, starting from Lemma \ref{lem_RationalPointsApproac}, has a straightforward analogue ``from the right" of the point $P=(w,z)$, yielding a \emph{decreasing} sequence $(p^R_{n})_n$ of rationals converging to $w$, and functions
\[
F^R_n: [p^R_{n+1}, p^R_n] \to \mathbb{R}
\]
such that $F^R_n \in \mathcal{F}$, and $F^R:= \bigcup_n F^R_n$ is an increasing $C^\infty$ function from $(w,p_1^R] \to \mathbb{R}$ with nonvanishing first derivative that has the following analogues of the properties above:
\begin{equation*}
\lim_{x \searrow w} F^R(x) = z, \ \lim_{x \searrow w} (F^R)'(x) = 1, \text{ and } \forall k \ge 2 \ \ \lim_{n \to \infty} \lVert (F^R_n)^{(k)} \rVert =0.
\end{equation*}

Let $\Psi^L$ be the affine function with slope 1 passing through the rational point $(p_1,q_1)$, $\Psi^R$ be the affine function with slope 1 passing through the rational point $(p^R_1, q^R_1)$, and define $G:\mathbb{R} \to \mathbb{R}$ by
\[
G(x)=
\begin{cases}
\Psi^L(x) & \text{ if } x < p_1 \\
F(x) & \text{ if } x \in [p_1,w) \\
z & \text{ if } x=w \\
F^R(x) & \text{ if } x  \in (w,p_1^R] \\
\Psi^R(x) & \text{ if } x > p_1^R. 
\end{cases}
\]

Then $G$ is continuous, invertible, and has the property that for every $x \ne w$, there is some $f \in \mathcal{F}$ such that $G(x) = f(x)$.\footnote{This $f$ is either $\Psi^L$ (if $x < p_1$), or one of the $F_n$'s (if $x \in [p_1,w)$), or one of the the $F^R_n$'s (if $x \in (w,p_1^R]$), or $\Psi^R$ (if $x > p_1^R$).}

It remains to prove that $G \in D^\infty_{\text{Lipschitz}}$.  Now $G$ is affine on $(-\infty,p_1]$ and on $[p_1^R,\infty)$.  In particular, 
\begin{equation}\label{eq_TailsBoundedDer}
G |_{(-\infty,p_1] \cup [p_1^R,\infty)} \text{ has bounded derivatives of all orders.}
\end{equation}
So, to prove that $G \in D^\infty_{\text{Lipschitz}}$, it suffices to show that $G \in D^\infty$; i.e., that both $G$ and $G^{-1}$ are $C^\infty$ functions.\footnote{Once we know that both $G$ and $G^{-1}$ are $C^\infty$, then by the Heine-Cantor and Extreme Value Theorems, for each $k \in \mathbb{N}$ the functions $G^{(k)}$ and $(G^{-1})^{(k)}$ would be bounded on the closed intervals $[p_1, p^R_1]$ and $[q_1, q^R_1]$, respectively.  Together with \eqref{eq_TailsBoundedDer}, this implies $G \in D^\infty_{\text{Lipschitz}}$. }  Furthermore, by the Inverse Function Theorem, to prove that the invertible function $G$ is in $D^\infty$, it suffices to show that $G \in C^\infty$ and that $G'$ is never zero.  Since $F_n$ agrees with $F_{n+1}$ (and $F^R_n$ agrees with $F^R_{n+1}$) at all derivatives at their meeting points, and their first derivatives are always positive, the only nontrivial point to deal with is $w$; we will show that:
\begin{itemize}
 \item  $G'(w) = 1$; and
 \item $G^{(k)}(w) = 0$ for $k \ge 2$.  
\end{itemize} 
 We will prove these from the left of $w$; the proof from the right is similar.  The following lemma is a minor variant \cite[Lemma 7]{MR3552748}:
\begin{lemma}\label{lem_FillInRightDeriv}
Suppose $f$ is continuous on $(a,b]$, differentiable on $(a,b)$, and 
\[
\lim_{x \nearrow b} f'(x) = L.
\]
Then $f$ is left differentiable at $b$, and $f'_-(b) = L$.
\end{lemma}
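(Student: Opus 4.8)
The plan is to reduce everything to a single application of the Mean Value Theorem. Fix an arbitrary $x \in (a,b)$. Since $f$ is continuous on $(a,b]$ it is continuous on the closed interval $[x,b]$, and since $f$ is differentiable on $(a,b)$ it is differentiable on the open interval $(x,b)$; hence the Mean Value Theorem furnishes a point $c_x \in (x,b)$ with $\frac{f(b)-f(x)}{b-x} = f'(c_x)$.

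The key observation is that the point $c_x$ is trapped between $x$ and $b$, so as $x \nearrow b$ we are forced to have $c_x \nearrow b$ as well. Concretely, given $\varepsilon > 0$, the hypothesis $\lim_{x \nearrow b} f'(x) = L$ provides a $\delta \in (0, b-a)$ such that $\lvert f'(t) - L \rvert < \varepsilon$ whenever $t \in (b-\delta, b)$. Then for any $x \in (b-\delta,b)$ we have $c_x \in (b-\delta, b)$ as well, so $\left\lvert \frac{f(b)-f(x)}{b-x} - L \right\rvert = \lvert f'(c_x) - L \rvert < \varepsilon$. Since $\varepsilon$ was arbitrary, this says precisely that $\lim_{x \nearrow b} \frac{f(b)-f(x)}{b-x} = L$; that is, $f$ is left differentiable at $b$ with $f'_-(b) = L$.

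I expect no genuine obstacle here: the only points requiring a moment's care are that the Mean Value Theorem needs $f$ to be continuous at the endpoint $b$ from the left (which is exactly what ``continuous on $(a,b]$'' supplies) together with differentiability on the open interval, and that the auxiliary point $c_x$ automatically converges to $b$ via the squeeze $x < c_x < b$. One could alternatively invoke L'H\^opital's rule on the indeterminate quotient $\frac{f(b)-f(x)}{b-x}$ as $x \nearrow b$, but the direct Mean Value Theorem argument is cleaner and sidesteps verifying L'H\^opital's hypotheses.
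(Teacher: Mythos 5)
Your proof is correct and is essentially the same as the paper's: both apply the Mean Value Theorem on $[x,b]$ and use that the intermediate point is squeezed toward $b$, the only difference being that you phrase the limit via $\varepsilon$--$\delta$ while the paper uses sequences.
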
 
\begin{proof}
Suppose $(x_n)$ is an arbitrary sequence in $(a,b)$ that converges to $b$.  By the Mean Value Theorem, for each $n$ there is a $c_n \in (x_n,b)$ such that $f'(c_n) = \frac{f(b)-f(x_n)}{b-x_n}$.  By assumption, $\lim_{n \to \infty} f'(c_n) = L$, so $\lim_{n \to \infty} \frac{f(b)-f(x_n)}{b-x_n} = L$.  
\end{proof}

By \eqref{eq_C_infty_before_w}, $G$ is $C^\infty$ on $(-\infty,w)$, and we already noted that $G$ is continuous on $\mathbb{R}$.  By \eqref{eq_LimitFirstDeriv}, continuity of $G$, and Lemma \ref{lem_FillInRightDeriv}, $G'_-(w) = 1 = \lim_{x \nearrow w} G'(x)$ and hence $G'$ is continuous on $(-\infty,w]$.  Then, using \eqref{eq_LimitHigherDerivZero} (with $k=2$) and applying Lemma \ref{lem_FillInRightDeriv} to $G'$, we get that $G^{(2)}_-(w) = 0 = \lim_{x \nearrow w} G^{(2)}(x)$ and $G^{(2)}$ is continuous on $(-\infty,w]$.  Continuing by induction, we get that $G^{(k)}_-(w) = 0 = \lim_{x \nearrow w} G^{(k)}(x)$ for all $k \ge 2$.


\section{Concluding Remarks}

Hardin-Taylor's Question \ref{q_BigQ} (page \pageref{q_BigQ}) is still very much open, as there is still a big gap between the ``lower frontier" of our Theorem \ref{thm_MainPos} and the ``upper frontier" of our Theorem \ref{thm_MainNeg}. 

Our Theorem \ref{thm_MainPos} isolated some (topological) group-theoretic properties that guarantee existence of anonymous predictors.  It is natural to pursue this further.  In particular, Theorem \ref{thm_FolkAppHolder} implies that any group $U \le \text{Homeo}^+(I)$ satisfying the assumptions of Theorem \ref{thm_MainPos} is a \textbf{metabelian} group, meaning that its commutator subgroup is abelian.  And Theorem \ref{thm_MainPos} encompasses all known examples of groups for which good, anonymous predictors exist (for every set $S$).  This suggests:
\begin{question}
Suppose $U \le \text{Homeo}^+(\mathbb{R})$, and for every set $S$ there exists a $U$-anonymous, good $S$-predictor.  Must $U$ be metabelian?  Must $U$ at least be solvable?
\end{question}



\begin{bibdiv}
\begin{biblist}
\bibselect{../../../MasterBibliography/Bibliography}
\end{biblist}
\end{bibdiv}

\end{document}